\documentclass[parskip=half]{scrartcl}

\usepackage{amsmath,mathbbol,amssymb,mathbbol,amsthm}
\usepackage{mathpazo}
\usepackage{mathtools}
\usepackage[shortlabels]{enumitem}
\usepackage[noadjust]{cite}



\newcommand{\embedt}{\beta}

\newcommand{\one}{\mathbf{1}}




\newcommand{\blangle}{\bigl\langle}
\newcommand{\brangle}{\bigr\rangle}

\newcommand{\x}{x}
\newcommand{\eps}{\varepsilon}
\newcommand{\embeds}{\hookrightarrow}
\newcommand{\defn}{\coloneqq}

\renewcommand{\Re}{\operatorname{Re}}
\renewcommand{\Im}{\operatorname{Im}}
\newcommand{\uW}{\underline{W}}

\newcommand{\hinf}{\mathrm{H}^\infty}
\newcommand{\ddmu}{}

\newcommand{\R}{\mathbb{R}}
\newcommand{\N}{\mathbb{N}}
\newcommand{\C}{\mathbb{C}}

\DeclareMathOperator{\tr}{tr}      
\DeclareMathOperator{\dom}{dom}    
\DeclareMathOperator{\dist}{dist}  
\DeclareMathOperator{\diam}{diam}  
\DeclareMathOperator{\supp}{supp}  
\DeclareMathOperator{\sign}{sign}  
\DeclareMathOperator{\dive}{div}   

\newcommand{\forma}{\mathfrak{a}}
\newcommand{\formb}{\mathfrak{b}}
\newcommand{\coa}{a}
\newcommand{\coeffs}{s}
\newcommand{\coefft}{t}
\newcommand{\st}{\colon}
\newcommand{\ud}{d}
\newcommand{\abs}[1]{\lvert#1\rvert}

\usepackage[dvipsnames]{xcolor}
\usepackage[colorlinks=true,allcolors={RoyalBlue}]{hyperref}
\hypersetup{final}
\usepackage[capitalise,nameinlink]{cleveref}

\newtheorem{theorem}{Theorem}[section]
\newtheorem{lemma}[theorem]{Lemma}
\newtheorem{proposition}[theorem]{Proposition}
\newtheorem{corollary}[theorem]{Corollary}

\theoremstyle{definition}
\newtheorem{remark}[theorem]{Remark}

\newtheorem{assumption}[theorem]{Assumption}



\title{On the numerical range of second order elliptic operators
  with mixed boundary conditions in $L^p$}

\author{Ralph Chill \thanks{Institut f\"ur Analysis, Fakult\"at
    Mathematik, TU Dresden, 01062 Dresden,
    Germany. (\texttt{ralph.chill@tu-dresden.de})} \and Hannes
  Meinlschmidt \thanks{Radon Institute for Computational and
    Applied Mathematics (RICAM), Altenberger
    Stra{\ss}e 69, 4040 Linz
    Austria. (\texttt{hannes.meinlschmidt@ricam.oeaw.ac.at})} \and
  Joachim Rehberg \thanks{Weierstra{\ss} Institute for Applied
    Analysis and Statistics (WIAS), Mohrenstra{\ss}e 39, 10117
    Berlin (\texttt{joachim.rehberg@wias-berlin.de})}}


\begin{document}

\maketitle

\begin{abstract}
  We consider second order elliptic operators with real,
  nonsymmetric coefficient functions which are subject to mixed
  boundary conditions. The aim of this paper is to provide
  uniform resolvent estimates for the realizations of these
  operators on $L^p$ in a most direct way and under minimal
  regularity assumptions on the domain. This is analogous to the
  main result in~\cite{CFMP06}. Ultracontractivity of the
  associated semigroups is also considered. All results are for
  two different form domains realizing mixed boundary
  conditions. We further consider the case of Robin- instead of
  classical Neumann boundary conditions and also allow for
  operators inducing dynamic boundary conditions. The results
  are complemented by an intrinsic characterization of elements
  of the form domains inducing mixed boundary
  conditions.\end{abstract}

\section{Introduction}

The regularity of solutions of elliptic or parabolic operators
is a classical subject. Uniform estimates for resolvents of
elliptic operators and for the semigroups generated by them are
central instruments for the study of nonautonomous
linear 
or quasilinear parabolic equations.  Much of the theory is
standard nowadays and treated in many comprehensive books on
parabolic evolution equations; we refer exemplarily
to~\cite[Chapter~II]{Am95},~\cite[Chapter~6.1]{Lu95},~\cite{DeHiPr03} 
or~\cite{PrSi16}.

In this work, we provide uniform resolvent estimates for
the $L^p$-realizations of second order elliptic operators with
real, nonsymmetric coefficient functions posed on bounded
domains in $\R^d$ and subject to mixed
boundary conditions, under minimal regularity assumptions on the
domain. In case of smooth domains and real and symmetric
coefficient functions, such uniform resolvent estimates are
classical~(\cite[Chapter~7.3]{Pa83}) and have been generalized
in~\cite{GrKaRe01} to non-smooth domains and mixed boundary
conditions. Moreover, the case of non-symmetric coefficient
functions has been treated in~\cite{CFMP06} under pure Dirichlet or
pure Neumann- or Robin conditions.

Our main result is a complement to the main result in~\cite{CFMP06}. We give an (optimal) estimate for the half angle
$\theta_p$ of the sector containing the numerical range of the
$L^p$-realization of the elliptic operator. The proof given here
differs from the proof in~\cite{CFMP06} and uses ideas from~\cite{CiMa05}. The estimate for the numerical range immediately
yields resolvent estimates outside the sector with half angle
$\theta_p$, and these estimates stand in a one-to-one
correspondence to the holomorphy of the corresponding semigroup
on a sector with half angle $\frac{\pi}{2} -\theta_p$;
see~\cite[Theorem 1.45]{Ou04} for details. This yields an
$\hinf$ functional calculus, $\mathcal{R}$-sectoriality and
maximal parabolic regularity for the associated operators. It is
known that the obtained resolvent estimates estimates are in
general optimal~\cite{CFMP05,Km02}. We moreover mention
ultracontractivity and associated properties. The results extend
to elliptic operators with mixed Robin- and Dirichlet boundary
conditions, and are also  applied to parabolic evolution
equations with dynamical boundary conditions.

All $L^p$-realizations of elliptic operators are here defined
via sesquilinear forms. It turns out that sectoriality of the
underlying operator in $L^p$, the $\hinf$-functional calculus,
${\mathcal R}$-sectoriality and $L^p$-maximal regularity solely
depend on the properties of the coefficients of the second order
elliptic operator and structural properties of the form domain,
but neither on the regularity of the domain nor on the type of
boundary conditions. On the other hand, ultracontractivity of
the underlying semigroup and a characterisation of the trace
zero property on (parts of) the boundary for elements from the
form domain (see Appendix) solely depend on regularity of the
form domain via properties of the domain in $\R^d$ and the type
of boundary conditions. They are, however, stable under the
passage from mixed Neumann- and Dirichlet boundary conditions to
mixed Robin- and Dirichlet boundary conditions, if the part of
the boundary where Dirichlet boundary conditions are imposed
does not change.

For the sake of readability, we consider only pure second order
operators. Moreover, it would also be possible to deal with
weighted Lebesgue- and associated Sobolev spaces which would
allow for more general and involved differential
operators. Since this is rather involved to combine with mixed
boundary conditions and already incorporated in~\cite{CFMP06} in
the case of non-mixed boundary conditions, we have decided to
not include weighted spaces.

\section{Preliminaries} \label{sec.preliminaries}

Let $\Omega \subseteq \R^d$ be a domain. We do not require
$\Omega$ to be bounded and there are no further regularity
assumptions on $\Omega$ until
Section~\ref{sec:ultr-comp-resolv}.
Let us denote the usual (complex) Lebesgue spaces by
$L^p(\Omega)$ and the corresponding first order Sobolev spaces,
given by all $L^p(\Omega)$ functions whose first-order weak
derivatives are again in $L^p(\Omega)$, by $W^{1,p}(\Omega)$.

\subsection{Form domain}
We first define the considered form domains $V$. These will be
Sobolev spaces incorporating a partially vanishing trace
condition leading to associated evolution equations with mixed
boundary conditions. Let $D\subseteq\partial\Omega$ be a closed
subset of the boundary of $\Omega$, the \emph{D}irichlet
boundary part. In all of the following, let $V$ be either of the
following spaces: 
\begin{equation*}
  V = W^{1,2}_D(\Omega) \defn \overline{\uW^{1,2}_D(\Omega)}^{W^{1,2}}  \quad
  \text{or} \quad V = \widetilde{W^{1,2}_D(\Omega)} \defn
  \overline{C_D^\infty(\Omega)}^{W^{1,2}}
\end{equation*}
where
\begin{align*}
  \uW^{1,2}_D(\Omega)& \defn \bigl\{ u\in W^{1,2}(\Omega) \st
  \dist (\supp u , D) >0 \bigr\}
  \intertext{and}
  C_D^\infty(\Omega) & \defn \bigl\{ u\in C^\infty (\Omega) \st u
  = v|_\Omega \text{ for } v\in C^\infty_c (\R^d ) \text{ with }
  \supp v \cap D = \emptyset \bigr\}.
\end{align*}

Clearly, for $D = \emptyset$, the former $V= W^{1,2}_D(\Omega)$
is just the usual $W^{1,2}(\Omega)$. Note that
$C_c^\infty(\Omega) \subset V$ for both choices of $V$, so
either $V$ is dense in $L^2(\Omega)$. Moreover, both spaces
satisfy the following additional properties:
\begin{enumerate}[label=\protect$(V_{\arabic*})$]
\item $V$ is a sublattice of
  $W^{1,2}(\Omega)$,\label{item:assumption.v-sublattice} that is,
  for every $u\in V$ one has $(\Re u)^+\in V$, and
\item $V$ is stable under the operation
  $u\mapsto (|u|\wedge \one)\sign
  u$.\label{item:assumption.v-proj}
\end{enumerate}
Indeed, these properties are classical for
$V = \widetilde{W^{1,2}_D(\Omega)}$ and for $W^{1,2}(\Omega)$,
see~\cite[Propositions~4.4\&4.11]{Ou04}. For
$V = W^{1,2}_D(\Omega)$ with $D \neq \emptyset$, they follow
from the $W^{1,2}(\Omega)$ case for the dense subspace
$\uW^{1,2}_D(\Omega)$ and then by continuity for the whole
$W^{1,2}_D(\Omega)$. We mention that in
property~\ref{item:assumption.v-proj}, the constant function
$\one$ need not be an element of the form domain $V$.

\begin{remark}
  \label{rem:form-domain-equiv-def}
  The above definitions of $V$ imply a certain, implicit zero
  trace property on $D$ for its
  elements in an abstract way. It is possible to obtain more
  explicit characterisations of zero traces, at least under
  certain regularity assumptions. Indeed, in the appendix, we
  show that under natural, very mild assumptions on $D$ and
  $\partial\Omega$ and the regularity of the relative boundary
  of $D$ within $\partial \Omega$
  (\cref{assumption.v3}), the space $W^{1,2}_D(\Omega)$ can be
  characterized by the set of all $u \in W^{1,2}(\Omega)$ which satisfy
  a Hardy-type inequality w.r.t.\ $D$ or which satisfy
    \begin{equation*}
      \lim_{r\searrow0} \frac1{|B_r(x)|}\int_{B_r(x)\cap\Omega}
      |u| = 0
    \end{equation*}
    for $\mathcal{H}_{d-1}$-a.e.\ $x \in D$. Under stronger
    conditions, $W^{1,2}_D(\Omega)$ and
    $\widetilde{W^{1,2}_D(\Omega)}$ in fact coincide and can
    then be characterized intrinsically as mentioned. We refer to
    e.g.~\cite[Theorem~2.1]{EgTo17}.  (We assume such a setup in
    Section~\ref{sec:an-extension-robin} below, see
    \cref{assumption.s}.) See moreover
    also~\cite[Theorem~8.7~(iii)]{BMMM14} in the context of
    $(\eps,\delta)$-domains.
\end{remark}

\subsection{Coefficient function and form}  Let
$\coa = (\coa_{ij})\in L^\infty(\Omega ; \R^{d\times d})$ be a
real, \emph{uniformly elliptic} coefficient function, that is,
$\Re \langle \coa (x) \xi , \xi\rangle \geq \eta \|\xi \|^2$ for
every $x\in\Omega$, $\xi\in\C^d$ and some ellipticity constant
$\eta >0$. Here $\langle \cdot ,\cdot \rangle$ denotes the usual
Hermitian inner product in $\C^d$. It follows from the
boundedness and the uniform ellipticity that $\coa$ is in
addition \emph{uniformly sectorial}, that is, there exists an
angle $\theta_2 \in [0,\frac{\pi}{2}[$ such that

\begin{equation} \label{one} \langle \coa (x) \xi , \xi \rangle
  \in \overline{\Sigma_{\theta_2}} \quad (x\in\Omega , \,
  \xi\in\C^d),
\end{equation}

where
$\Sigma_\theta = \{ r \, e^{i \varphi} \st r \in
\mathopen]0,\infty[ \text{ and } \varphi \in \mathopen]-\theta,
\theta [\}$ is the open sector of half-angle $\theta$ if
$\theta \in \mathopen]0,\frac{\pi}{2}[$ and
$\Sigma_0 = [0,\infty[$ is the positive real axis. Equivalently,
the sectoriality means that

\begin{equation} \label{oneprime} \bigl|\Im \langle \coa (x) \xi
  , \xi \rangle \bigr| \leq \tan\theta_2 \cdot\, \Re \langle
  \coa (x) \xi , \xi \rangle \quad (x\in\Omega , \, \xi\in\C^d).
\end{equation}




We define the sesquilinear form $\forma \colon W^{1,2}(\Omega)
\times W^{1,2}(\Omega) \to \C$ by
\[
  \forma [u,v] = \int_\Omega \langle a \nabla u , \nabla
  v\rangle \ddmu \quad \quad (u,v\in W^{1,2}(\Omega)).
\]
Due to the properties~\ref{item:assumption.v-sublattice}
and~\ref{item:assumption.v-proj} of $V$, and by the assumptions on the
coefficient function $\coa$, the restriction $\forma_V$ of $\forma$ to $V
\times V$
is a sub-Markovian form. This means that $\forma_V$ is closed,
continuous, and accretive, and the associated operator $A_2$ on
$L^2(\Omega)$ given by
\begin{align*}
  \dom A_2 & \defn \bigl\{ u\in L^2(\Omega)\colon u\in V \text{ and  there exists } f\in L^2(\Omega)\text{ such that}\\
  & \phantom{\defn \{ u\in L^2(\Omega)\colon~} \text{for all } v\in V : \forma [u,v] = \int_\Omega f \bar{v} \ddmu~\bigr\}  , \\
  A_2 u & \defn f ,
\end{align*}
is the negative generator of a positive, analytic, contraction
$C_0$-semigroup $T_2$ on $L^2(\Omega)$ which is in addition
$L^\infty$-contractive, see~\cite[Thms.~1.54,~4.2
and~4.9]{Ou04}.) The semigroup $T_2$ extrapolates consistently
to a positive contraction semigroup $T_p$ on $L^p(\Omega)$ for every
$p\in [1,\infty [$, and the semigroup $T_p$ is analytic if $p>1$
(\cite[Proposition~3.12,~p.56/57\&96]{Ou04}). Denote by $A_p$ the
negative generator of $T_p$.

\begin{remark} \label{rem:sobo-space-properties}
  \begin{enumerate}[(a)]
  \item Both choices for $V$ lead to realizations $A_p$ on
    $L^p(\Omega)$ of the second order elliptic operator
    $-\dive (\coa \nabla \cdot )$ equipped with Dirichlet
    boundary conditions on $D\subseteq\partial\Omega$ and
    Neumann boundary conditions on $\partial\Omega\setminus
    D$. In general, $W^{1,2}_D(\Omega)$ induces a stronger form of
    Neumann conditions on $\partial\Omega\setminus D$ for
    functions in the domain of
    $A_2$. 
  This can be seen for example in the case where $\Omega$ is a
  disc around the origin from which the positive $x$-axis is
  removed to form a slit. Then $u \in \dom A_2$ satisfies
  \begin{align*}
    \partial_{\nu\downarrow}u = \partial_{\nu\uparrow}u & =
    0\quad\text{if}~V = W^{1,2}(\Omega)  \intertext{and}  \bigl[ \partial_{\nu}u\bigr]= \partial_{\nu\downarrow}u
    -\partial_{\nu\uparrow}u &= 0 \quad\text{if}~V = \widetilde{W^{1,2}_D(\Omega)}
  \end{align*}
  along the slit, where the arrows stand for the conormal
  derivatives w.r.t.\ $\coa$ taken from either side.
\item There is the nomenclature \emph{good Neumann boundary
    conditions} for $V = \widetilde{W^{1,2}_D(\Omega)}$ and
  \emph{Neumann boundary conditions} for $V=W^{1,2}_D(\Omega)$,
  see~\cite[Chapter~4]{Ou04}, related to the former space being
  a smaller, i.e., more regular subspace of $W^{1,2}(\Omega)$.
  For example, the form $\forma_V$ with good Neumann boundary
  conditions has the advantage of being a regular Dirichlet
  form, in the sense that $C(\overline{\Omega}) \cap V$ is dense
  in $V$.
\end{enumerate}
\end{remark}

\section{The numerical range} \label{s-numrange}

We next determine a sector which includes the numerical range of
$A_2$. First, a preliminary lemma.

\begin{lemma} \label{lem.core.ap} For every $p\in [1,\infty[$,
  the space $\dom A_2\cap \dom A_p \cap L^\infty(\Omega)$ is a core
  for $A_p$, that is, it is dense in $\dom A_p$ equipped with
  the graph norm.
\end{lemma}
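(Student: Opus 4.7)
The plan is to approximate any $u \in \dom A_p$ by elements $u_n \defn (\lambda + A_p)^{-1} f_n$ for some fixed $\lambda > 0$, with right-hand sides $f_n$ drawn from the dense subspace $L^p(\Omega) \cap L^2(\Omega) \cap L^\infty(\Omega)$; for instance, $C_c^\infty(\Omega)$ is contained in this intersection and is dense in $L^p(\Omega)$. Setting $f \defn (\lambda + A_p) u$ and choosing $f_n \to f$ in $L^p(\Omega)$, boundedness of the resolvent immediately gives $u_n \to u$ and $A_p u_n = f_n - \lambda u_n \to f - \lambda u = A_p u$ in $L^p(\Omega)$, so $u_n \to u$ in the graph norm of $A_p$.

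It remains to check $u_n \in \dom A_2 \cap L^\infty(\Omega)$. Membership in $\dom A_2$ is immediate from the consistency of the semigroups $T_p$ and $T_2$ on $L^p(\Omega) \cap L^2(\Omega)$ noted before the lemma: this consistency forces agreement of the associated resolvents on the same intersection, and hence $u_n = (\lambda + A_2)^{-1} f_n \in \dom A_2$ because $f_n \in L^2(\Omega)$. For the $L^\infty(\Omega)$-membership I would use the Laplace representation
\[
  u_n = \int_0^\infty e^{-\lambda t}\, T_p(t) f_n \dd t
\]
as a Bochner integral in $L^p(\Omega)$, together with the $L^\infty$-contractivity of $T_p$ on $L^p\cap L^\infty$ inherited from that of $T_2$ through the consistent extrapolation. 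Since $\|T_p(t) f_n\|_\infty \le \|f_n\|_\infty$ for every $t \geq 0$, this should yield $\|u_n\|_\infty \leq \|f_n\|_\infty/\lambda$.

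The main obstacle is making the last step rigorous: passing from a Bochner integral in $L^p(\Omega)$ to a pointwise a.e.\ bound. One way is to fix a jointly measurable representative of $(t,x)\mapsto (T_p(t) f_n)(x)$, apply Fubini to interchange the time integration with the essential-supremum-in-$x$ estimate, and conclude via $\|T_p(t) f_n\|_\infty \le \|f_n\|_\infty$. Alternatively, one may invoke the consistent weakly-$*$-continuous extension of $T_p$ to $L^\infty(\Omega)$ arising by duality from the $L^1$-semigroup, so that the integral also makes sense as an $L^\infty(\Omega)$-valued object and necessarily agrees a.e.\ with the $L^p$-Bochner integral. Either route is standard, but this is where the actual content of the argument lies; everything else is routine manipulation of resolvents.
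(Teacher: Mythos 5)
Your proof is correct and takes essentially the same route as the paper's: both approximate $u\in\dom A_p$ by resolvent images $(\lambda+A_p)^{-1}f_n$ of data $f_n$ drawn from a dense subspace of $L^p(\Omega)\cap L^2(\Omega)\cap L^\infty(\Omega)$, invoke consistency of the resolvents to place $u_n$ in $\dom A_2$, and use $L^\infty$-contractivity of the semigroup to keep $u_n$ in $L^\infty(\Omega)$. The only cosmetic difference is that the paper takes $L^1(\Omega)\cap L^\infty(\Omega)$ as the dense subspace and suppresses the justification of the $L^\infty$-bound on $(\lambda+A_p)^{-1}f_n$ that you spell out at the end.
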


\begin{proof}
  The semigroups $T_2$ and $T_p$ are consistent, that is,
  $T_2 = T_p$ on $L^2(\Omega)\cap L^p(\Omega)$. By taking Laplace
  transforms, $(I+A_2)^{-1} = (I+A_p)^{-1}$ on
  $L^2(\Omega)\cap L^p(\Omega)$. The two resolvents thus also coincide
  on the smaller space $L^1(\Omega) \cap L^\infty(\Omega)$, which is
  dense in $L^p(\Omega)$. The resolvent $(I+A_p)^{-1}$ being an
  isomorphism between $L^p(\Omega)$ and $\dom A_p$ (the latter
  space being equipped with the graph norm), it maps dense
  subspaces to dense subspaces. Since $(I+A_p)^{-1}$ maps
  $L^1(\Omega) \cap L^\infty(\Omega)$ onto a subspace of
  $\dom A_2\cap \dom A_p \cap L^\infty(\Omega)$, it follows that
  the latter space is dense in $\dom A_p$.
\end{proof}

The next lemma paves the road for the actual estimation of the
numerical range.

\begin{lemma} \label{l-justif} For every
  $u\in W^{1,2} (\Omega)\cap L^\infty(\Omega)$ and every
  $\alpha \geq 1$, the functions $|u|^\alpha$ and
  $|u|^{\alpha -1} u$ belong to
  $W^{1,2} (\Omega) \cap L^\infty(\Omega)$ and
  \begin{align*}
    \nabla |u| & = \Re (\frac{\bar{u}}{|u|} \nabla u) , \\
    \nabla |u|^\alpha & = \alpha |u|^{\alpha -1} \nabla |u| , \\
    \nabla (|u|^{\alpha -1} u) & = (\alpha -1) |u|^{\alpha -2} u \nabla |u| + |u|^{\alpha -1} \nabla u .
  \end{align*}
\end{lemma}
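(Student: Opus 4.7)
The approach is incremental: each of the three gradient formulas is derived from the preceding one together with a suitable chain rule. Throughout, we adopt the standard convention that $\nabla |u|$ vanishes on $\{u = 0\}$ and that the product $|u|^{\alpha - 2} u$ appearing in the last formula is understood as zero where $u = 0$; this is consistent with the fact that $\nabla u = 0$ almost everywhere on $\{u = 0\}$ for any Sobolev function (a standard Stampacchia-type property).

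The $L^\infty$-bounds are immediate, since both $|u|^\alpha$ and $|u|^{\alpha - 1} u$ are dominated a.e.\ by $\|u\|_\infty^\alpha$. The first formula, $\nabla |u| = \Re(\bar u/|u|\,\nabla u)$, is classical for complex-valued $W^{1,2}$-functions (see~\cite[Proposition~4.4]{Ou04}); in particular $|u| \in W^{1,2}(\Omega) \cap L^\infty(\Omega)$. For the second formula, note that since $\alpha \geq 1$ the function $\phi(t) \defn t^\alpha$ is $C^1$ on $[0,\infty)$ and Lipschitz on $[0, \|u\|_\infty]$. Applying the standard chain rule for Sobolev functions to the real-valued, bounded function $|u|$ yields $|u|^\alpha = \phi \circ |u| \in W^{1,2}(\Omega)$ together with $\nabla |u|^\alpha = \alpha |u|^{\alpha - 1} \nabla |u|$.

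The third formula is the delicate one; the plan is to prove it by regularisation. Set $|u|_\eps \defn (|u|^2 + \eps^2)^{1/2}$ and $G_\eps(u) \defn |u|_\eps^{\alpha - 1} u$. Since $|u|_\eps \geq \eps > 0$, the function $|u|_\eps$ falls within the scope of the classical chain rule, and combining the product rule, the chain rule, and the identity $\nabla(|u|^2) = 2 |u| \nabla |u|$ (derived from the first formula) produces
\begin{equation*}
  \nabla G_\eps(u) = (\alpha - 1) |u|_\eps^{\alpha - 3} |u|\, u\, \nabla |u| \,+\, |u|_\eps^{\alpha - 1} \nabla u \quad \text{a.e.\ in } \Omega.
\end{equation*}
As $\eps \searrow 0$ one has $G_\eps(u) \to |u|^{\alpha - 1} u$ in $L^2(\Omega)$, and both summands on the right converge pointwise a.e.\ to $(\alpha - 1) |u|^{\alpha - 2} u \nabla |u|$ and $|u|^{\alpha - 1} \nabla u$, respectively. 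A uniform $L^2(\Omega)$-dominator is furnished by the elementary estimates $|u|_\eps^{\alpha - 3} |u|^2 \leq C_\alpha (\|u\|_\infty + \eps)^{\alpha - 1}$ and $|u|_\eps^{\alpha - 1} \leq (\|u\|_\infty + \eps)^{\alpha - 1}$, where the former splits into the two cases $1 \leq \alpha \leq 3$ (using $|u|_\eps \geq |u|$) and $\alpha > 3$ (using $|u|_\eps \leq \|u\|_\infty + \eps$). Dominated convergence gives $\nabla G_\eps(u) \to (\alpha - 1)|u|^{\alpha - 2} u \nabla |u| + |u|^{\alpha - 1} \nabla u$ in $L^2(\Omega)$, and the closedness of the distributional gradient delivers the claim.

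The main obstacle is the singularity of $z \mapsto |z|^{\alpha - 1} z$ at $z = 0$ in the regime $1 \leq \alpha < 2$, which prevents a direct Lipschitz-type chain rule; the regularisation by $|u|_\eps$ and the vanishing of $\nabla u$ and $\nabla |u|$ a.e.\ on $\{u = 0\}$ together render the pointwise limit formula meaningful and uniformly $L^2$-dominated.
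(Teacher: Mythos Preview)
Your proof is correct and follows essentially the same route as the paper, which merely cites \cite[Proposition~4.4]{Ou04} for $\nabla|u|$ and then says the rest ``follow readily by the chain rule and smooth approximation''; your regularisation $|u|_\eps=(|u|^2+\eps^2)^{1/2}$ is exactly a way of making that smooth approximation explicit. The dominated-convergence argument is carried out carefully and the case split $1\le\alpha\le 3$ versus $\alpha>3$ for the bound on $|u|_\eps^{\alpha-3}|u|^2$ is handled correctly.
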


\begin{proof}
  For $\nabla|u|$, see~\cite[Proposition~4.4]{Ou04}. The remaining
  assertions follow readily by the chain rule and smooth
  approximation.
\end{proof}

The following result is a central one for this work. It is
contained in the proof of Theorem~1.1 in~\cite{CFMP06}, where
the authors establish an estimate of the angle of analyticity of
the semigroup $T_p$. (Compare with \cref{cor.angle.analyticity} below.) We give here an alternative
proof to the one in~\cite{CFMP06}.

\begin{theorem} \label{thm.numrange.neumann} For every
  $u\in W^{1,2}(\Omega) \cap L^\infty(\Omega)$ and every
  $p\in [2,\infty[$,
  \[
    \forma \bigl[u,|u|^{p-2}u\bigr] \in \Sigma_{\theta_p},
  \]
  where
  \[
    \tan \theta_p = \frac{\sqrt{(p-2)^2 +p^2 \tan^2
        \theta_2}}{2\sqrt{p-1}} ,
  \]
  and $\theta_2$ is as in~\eqref{one} or~\eqref{oneprime}.
\end{theorem}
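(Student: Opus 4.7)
The plan is to prove the sector inclusion for $\forma[u,|u|^{p-2}u]$ by showing that the integrand satisfies a pointwise a.e.\ bound obtained from a sharpened version of the sectoriality hypothesis~\eqref{oneprime} on $\coa$. Since $|u|^{p-2}\geq 0$ and since $\nabla u$ vanishes a.e.\ on $\{u = 0\}$, the argument reduces to a pointwise estimate on $\{u\neq 0\}$.

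First I apply \cref{l-justif} with $\alpha = p-1$ to get $|u|^{p-2}u \in W^{1,2}(\Omega)\cap L^\infty(\Omega)$ and the chain rule for $\nabla(|u|^{p-2}u)$. Introducing $\omega \defn (\bar u/|u|)\nabla u$, so that $\nabla u = (u/|u|)\omega$ and $\Re\omega = \nabla|u|$, I set $\xi \defn \Re\omega$ and $\eta \defn \Im\omega$. Using that $\coa$ has real entries and $\lvert u/|u|\rvert = 1$, the integrand collapses to $|u|^{p-2}\langle \coa\omega,\,(p-1)\xi + i\eta\rangle$. Splitting $\coa = \coa_s + \coa_a$ into symmetric and antisymmetric parts and setting
\[
  A \defn \langle \coa_s\xi,\xi\rangle,\ B \defn \langle \coa_s\eta,\eta\rangle,\ C \defn \langle \coa_s\xi,\eta\rangle,\ D \defn \langle \coa_a\eta,\xi\rangle,
\]
a direct expansion shows that the bracket has real part $(p-1)A+B$ and imaginary part $(p-2)C+pD$. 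Hence the theorem is equivalent to the pointwise a.e.\ inequality $|(p-2)C + pD| \leq \tan\theta_p\,[(p-1)A + B]$.

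To deduce this inequality I plan to establish the sharpened joint bound $C^2 + D^2/\tan^2\theta_2 \leq AB$ by testing~\eqref{oneprime} against the complex vector $s\xi + (t+iv)\eta$ with real parameters $s,t,v$: one computes that the real and imaginary parts of $\langle \coa\cdot,\cdot\rangle$ are $s^2 A + (t^2 + v^2)B + 2stC$ and $2svD$ respectively, and minimization in $t$ (at $t = -sC/B$) followed by an AM--GM optimization in $s, v$ gives $D^2 \leq \tan^2\theta_2(AB - C^2)$, which is precisely the stated joint bound. Once this is in hand, Cauchy--Schwarz yields $|(p-2)C + pD|^2 \leq [(p-2)^2 + p^2\tan^2\theta_2]\,AB$, and the AM--GM inequality $2\sqrt{(p-1)AB} \leq (p-1)A + B$ converts the right-hand side into $\tan^2\theta_p\,[(p-1)A + B]^2$, as required.

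The principal obstacle is the sharpness of the joint bound $C^2 + D^2/\tan^2\theta_2 \leq AB$. Naively adding the two separate estimates $C^2 \leq AB$ (from positivity of $\coa_s$) and $D^2 \leq \tan^2\theta_2\,AB$ (pointwise sectoriality of $\coa$) would yield only $C^2 + D^2/\tan^2\theta_2 \leq 2AB$, and this would degrade the final angle by a factor of~$\sqrt 2$. It is the three-parameter test vector, together with the $t$-minimization that removes the cross term $2stC$, that produces the optimal constant $\theta_p$. The degenerate cases $B = 0$ (where uniform ellipticity forces $\eta = 0$, hence also $C = D = 0$) and $\tan\theta_2 = 0$ (where $\coa$ is symmetric and $D = 0$) are handled directly.
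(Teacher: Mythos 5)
Your proof is correct, and it reaches the optimal angle. Conceptually it is close to the paper's argument — both proofs reduce to a pointwise estimate of the integrand, split $\coa$ into symmetric and antisymmetric parts, and finish with Cauchy--Schwarz and AM--GM — but you organize the computation differently at two meaningful points.

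First, you work directly with $u$ and the phase-normalized gradient $\omega = (\bar u/|u|)\nabla u$, so that the integrand becomes $|u|^{p-2}\langle a\omega, (p-1)\xi + i\eta\rangle$ with $\xi = \Re\omega$, $\eta = \Im\omega$. The paper instead introduces the auxiliary function $v = |u|^{(p-2)/2}u$ (an idea it attributes to Cialdea--Maz'ya) and works with $\phi = \nabla|v|$ and $\psi = \Im(\bar v/|v|\,\nabla v)$. These are equivalent up to the scaling $\phi = \tfrac p2|u|^{(p-2)/2}\xi$, $\psi = |u|^{(p-2)/2}\eta$, but your parametrization gives the cleaner weights $(p-1)A+B$ and $(p-2)C+pD$ without the $\frac4{pp'}$ bookkeeping; it also avoids applying \cref{l-justif} with a non-integer exponent $\alpha < 1$.

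Second, the key sharpening: you extract the joint inequality $C^2 + D^2/\tan^2\theta_2 \le AB$ directly from \eqref{oneprime} by testing against the three-parameter vector $s\xi + (t+iv)\eta$ and optimizing, and then apply a two-dimensional Cauchy--Schwarz to $(p-2)C+pD$. The paper instead writes the imaginary-part integrand as $\langle[(1-\tfrac2p)I + \coeffs^{-1/2}\coefft\,\coeffs^{-1/2}]\coeffs^{1/2}\psi, \coeffs^{1/2}\phi\rangle$, uses the operator-norm identity $\|\alpha I + S\| = \sqrt{\alpha^2 + \|S\|^2}$ for skew $S$, and bounds $\|\coeffs^{-1/2}\coefft\,\coeffs^{-1/2}\| \le \tan\theta_2$. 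In fact, the two routes produce the \emph{same} sharp pointwise bound $|(p-2)C + pD| \le \sqrt{(p-2)^2+p^2\tan^2\theta_2}\,\sqrt{AB}$: one can show the paper's operator-norm estimate already implies your joint bound, and conversely your joint bound is exactly what the paper's Cauchy--Schwarz-plus-norm step delivers. Your derivation is more elementary and self-contained, sidestepping the matrix $\coeffs^{-1/2}\coefft\,\coeffs^{-1/2}$ entirely; the paper's version is more modular, separating a reusable operator-norm observation from the rest. Your treatment of the degenerate cases $B=0$ and $\tan\theta_2=0$ is correct and necessary for the optimization to make sense.

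One small remark: you are also tacitly proving the stronger statement that the \emph{integrand} lies pointwise a.e.\ in $\overline{\Sigma_{\theta_p}}$, from which the integral inclusion follows by convexity of the sector; the paper's displayed estimates are also effectively pointwise, so this is not an extra hypothesis, but it is worth noting that this is what actually gets established.
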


\begin{proof}
  The case $p=2$ follows immediately from the sectoriality
  assumption on the coefficient function $\coa$ (see~\eqref{one}). So we focus on the case
  $p \in \mathopen]2,\infty[$, here proceeding similarly as in
  the proof of~\cite[Lemma~1]{CiMa05}. Let
  $u\in W^{1,2} (\Omega) \cap L^\infty(\Omega)$, and set
  $v \defn |u|^\frac {p-2}{2}u$. Then, by \cref{l-justif},
  $v$ and the functions $|v| = |u|^{\frac{p}{2}}$,
  $|v|^{\frac{2-p}{p}} v=u$ and
  $|v|^{\frac{p-2}{p}} v = |u|^{p-2} u$ all belong to
  $W^{1,2} (\Omega) \cap L^\infty(\Omega)$. By using the
  identities from \cref{l-justif} one obtains \begingroup
  \allowdisplaybreaks
  \begin{align*}
    \forma_N \bigl[u,\abs{u}^{p-2}u\bigr] & = \int\limits_{\Omega}
    \bigl\langle \coa \nabla u, \nabla ( \abs{u}^{p-2} {u}
    )\bigr\rangle  \ddmu  \\
    & = \int\limits_{\Omega} \Bigl\langle \coa \nabla \bigl (|v|^\frac {2-p}{p}v\bigr ), \nabla \bigl ( |v|^\frac {p-2}{p} v \bigr )\Bigr\rangle \ddmu  \\
    & = \int\limits_{\Omega} \langle \coa \nabla v, \nabla v\rangle \ddmu -\left(1-\tfrac{2}{p}\right)^2 \int\limits_{\Omega} \bigl\langle \coa \nabla |v|,\nabla |v| \bigr\rangle \ddmu \\
    & \phantom{=\ } +\left (1-\tfrac {2}{p} \right ) \left( \int\limits_{\Omega} \Bigl\langle \coa \frac {\overline v}{|v|} \nabla v , \nabla | v|\Bigr\rangle \ddmu  - \int\limits_{\Omega} \Bigl\langle \coa \nabla |v|, \frac {\overline v}{ |v|} \nabla v \Bigr\rangle \ddmu \right).
  \end{align*}
  \endgroup Here we put
  \begin{equation} \label{e-split} \phi\defn \Re \bigl (\frac
    {\overline v}{|v|}\nabla v \bigr ) = \nabla |v| \quad
    \text{and} \quad \psi \defn \Im \bigl (\frac {\overline
      v}{|v|}\nabla v\bigr ).
  \end{equation}
  Then
  \begin{equation*}
    \int\limits_{\Omega} \bigl\langle \coa \nabla
    v, \nabla v \bigr\rangle \ddmu  = \int\limits_{\Omega}
    \Bigl\langle \coa \frac {\overline v}{|v|}\nabla
      v, \frac {\overline v}{|v|} \nabla
      v \Bigr\rangle \ddmu  = \int\limits_{\Omega} \bigl\langle
    \coa(\phi +i \psi), \phi +i \psi \bigr\rangle \ddmu ,
  \end{equation*}
  and therefore \begingroup \allowdisplaybreaks
  \begin{align*}
    \forma\bigl[u, \abs{u}^{p-2}u \bigr] & =\int\limits_{\Omega}
    \bigl\langle \coa (\phi+i \psi), \phi+i \psi \bigr\rangle \ddmu -
    \left(1 -\tfrac {2}{p}\right)^2 \int\limits_{\Omega} \langle \coa
    \phi, \phi \rangle \ddmu \\ 
    & \phantom{=\ } \shoveright{ +\left (1-\tfrac {2}{p}
      \right) \left( \int\limits_{\Omega} \bigl\langle \coa (\phi + i
        \psi) , \phi \bigr\rangle \ddmu - \int\limits_{\Omega}
        \bigl\langle \coa\phi , \phi+ i \psi \bigr\rangle \ddmu
      \right)} \\ 
    & = \left ( 1 -\bigl ( 1- \tfrac {2}{p}\bigr )^2 \right
    ) \int \limits_{\Omega} \langle \coa \phi, \phi \rangle \ddmu +
    \int\limits_{\Omega} \langle \coa \psi,\psi
    \rangle \ddmu \\
    & \phantom{=\ } + \frac {2i}{p'} \int\limits_{\Omega} \langle \coa \psi, \phi \rangle \ddmu
    - \frac {2i}{p} \int\limits_{\Omega} \langle \coa\phi , \psi
    \rangle \ddmu .
  \end{align*}
  \endgroup Decomposing $\coa$ into its symmetric and
  antisymmetric part,
  \begin{equation*}
    \coeffs \defn  \frac{\coa+\coa^*}2, \quad \text{and} \quad \coefft \defn  \frac{\coa-\coa^*}2,
  \end{equation*}  
  and noting that
  $1 -\bigl ( 1- \frac {2}{p}\bigr )^2=\frac{4}{p \, p'}$, we
  thus obtain
  \begin{align} \label{e-Real} \notag \Re
    \forma\bigl[u,u\abs{u}^{p-2}\bigl] & = \frac{4}{p \, p'}
    \int \limits_{\Omega} \left[ \langle \coa \phi, \phi \rangle
      +
      \langle \coa \psi,\psi    \rangle \right] \ddmu\\
    & = \frac{4}{p \, p'} \int \limits_{\Omega} \left[ \bigl\|
      \coeffs^{\frac12} \phi\bigr\|^2 + \bigl\|
      \coeffs^{\frac12}\psi\bigr\|^2 \right] \ddmu
    \intertext{and} \notag
    \Im \forma\bigl[u,u\abs{u}^{p-2}\bigl] &=  \frac{1}{p'} \int\limits_{\Omega} \langle \coa \psi, \phi \rangle \ddmu - \frac{1}{p} \int\limits_{\Omega} \langle \coa\phi , \psi \rangle \ddmu \\
    & = \left(1-\tfrac2p\right) \int\limits_{\Omega}
    \bigl\langle \coeffs \psi,\phi \bigr\rangle \ddmu +
    \int\limits_{\Omega} \bigl\langle \coefft \psi,\phi
    \bigr\rangle \ddmu . \label{e-Imag}
  \end{align}
  Hence
  \begin{align*}
    \Im \forma\bigl[u,u\abs{u}^{p-2}\bigl]& =
    \left(1-\tfrac2p\right) \int\limits_{\Omega} 
    \Bigl\langle \coeffs^{\frac12}\,\psi,\coeffs^{\frac12}\,\phi\Bigr\rangle \ddmu + \int\limits_{\Omega} \Bigl\langle \coeffs^{-\frac12}\,\coefft\,\coeffs^{-\frac12}\,
    \coeffs^{\frac12}\,\psi,\coeffs^{\frac12}\,\phi\Bigr\rangle \ddmu \\ 
    & = \int\limits_{\Omega} \left\langle \left[\bigl(1-\tfrac2p\bigr) I + \coeffs^{-\frac12}\,\coefft\,\coeffs^{-\frac12}\, \right]\coeffs^{\frac12}\,\psi,\coeffs^{\frac12}\,\phi \right\rangle \ddmu \\
    & \leq \frac12 \int\limits_{\Omega} \left\|\bigl(1-\tfrac2p\bigr) I + \coeffs^{-\frac12}\,\coefft\,\coeffs^{-\frac12}\, \right\| \left(\bigl\|\coeffs^{\frac12}\,\psi\bigr\|^2 +\bigl\|\coeffs^{\frac12}\,\phi\|^2\right) \ddmu.
  \end{align*}
  Since $\coefft$ is skew-symmetric, so is
  $\coeffs^{-\frac12}\coefft\coeffs^{-\frac12}$, and one gets
  \begin{equation*}
    \left\|\bigl(1-\tfrac2p\bigr)
      I +
      \coeffs^{-\frac12}\,\coefft\,\coeffs^{-\frac12}\,
    \right\| = \sqrt{(1-\tfrac2p)^2 + \bigl\|\coeffs^{-\frac12}\,\coefft\,\coeffs^{-\frac12}\bigr\|^2}.
  \end{equation*}
   
  Now, by the choice of the angle $\theta_2$ (see especially the
  estimate~\eqref{oneprime}),
  \begin{align*}
    |\bigl\langle \coeffs^{-\frac12}\coefft\coeffs^{-\frac12}\coeffs^{\frac12}\Re\xi,\coeffs^{\frac12}\Im\xi\bigr\rangle|  & = | \langle \coefft \Re\xi,\Im\xi\rangle | \\
    & = \frac12 \, | \Im \langle \coa \xi,\xi\rangle | \\
    & \leq \frac{\tan\theta_2}{2} \, \Re\langle \coa\xi,\xi\rangle \\
    & = \frac{\tan\theta_2}{2} \, \left[ \bigl\|\coeffs^{\frac12}\Re\xi\bigr\|^2 + \bigl\| \coeffs^{\frac12}\Im\xi\bigr\|^2 \right] .
  \end{align*}
  This implies
  $\|\coeffs^{-\frac12}\,\coefft\,\coeffs^{-\frac12}\| \le
  \tan\theta_2$, which together with the preceding estimate
  actually yields the claim.
\end{proof}

\begin{remark} \label{r-anleihe}
  \begin{enumerate}[(a)]
  \item The ideas of introducing the function $v$ and of using
    the splitting in~\eqref{e-split} in the proof above are
    taken from~\cite{CiMa05}, while the use of the operator
    $\coeffs^{- \frac {1}{2}} \coefft \coeffs^{- \frac {1}{2}}$
    is borrowed from~\cite{CFMP06}.
  \item It should not come as a surprise that the evaluation of
    the expression
    \[
      \frac {1}{p'} \langle \coa(x) \xi, \chi \rangle - \frac
      {1}{p} \langle \coa(x) \chi, \xi \rangle =
      \left(1-\tfrac2p\right) \bigl\langle \coeffs(x) \xi,\chi
      \bigr\rangle + \bigl\langle \coefft(x)\xi,\chi\bigr\rangle
    \]
    for $\xi$, $\chi \in \R^d$ and $x\in\Omega$ plays a crucial
    role, cf.~\eqref{e-Imag}.  It is an artefact of
    \begin{equation} \label{e-artefactof}
      \blangle \coa(x) (\chi + i \xi),\tfrac {1}{p'} \chi + \tfrac
      {i}{p} \xi \brangle, 
    \end{equation}
    namely its imaginary part, thanks to the fact that the
    coefficient function is supposed to be \emph{real}
    throughout this work. The expression in~\eqref{e-artefactof}
    has turned out to be very important in the case of complex
    coefficients; we refer to~\cite{CaDr19} and~\cite{EHRT19}.
  \end{enumerate}
\end{remark}
  
Let $A$ be a closed, linear operator on a Banach space $X$. The
\emph{numerical range} of this operator is the set

\[
  w(A) \defn \bigl\{ u^*(Au) \st u\in\dom{A}, \, \|u\|_X =1
  \text{ and } u^*\in J(u) \bigr\} ,
\]

where $J$ is the following, \emph{a priori} set-valued duality
map:

\[
  J(u) \defn \bigl\{ u^* \in X^* \st \|u^*\|_{X^*} = 1 \text{
    and } u^*(u) = \|u\|_X \bigr\} .
\]

But if $X = L^p(\Omega)$ for $p\in \mathopen]1,\infty[$ and
$\| u\|_{L^p(\Omega)} = 1$, then $J(u)$ contains only the element
$u^* \cong |u|^{p-2}u$.

We use \cref{thm.numrange.neumann} to determine the
numerical range for the operators $A_p$ associated to the form
$\forma_V$.

\begin{theorem} \label{t-numrange-dissipative} Let
  $p\in [2,\infty[$. Then the numerical range $w(A_p)$ of the
  operator $A_p$ is contained in the closed sector
  $\overline {\Sigma_{\theta_p}}$, where
  \[
    \tan \theta_p = \frac{\sqrt{(p-2)^2 +p^2 \tan^2
        \theta_2}}{2\sqrt{p-1}}
  \]
  with $\theta_2$ as in~\eqref{one}.
\end{theorem}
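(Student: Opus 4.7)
The plan is to transfer the numerical-range estimate from the sesquilinear form level (\cref{thm.numrange.neumann}) to the operator level via the core provided by \cref{lem.core.ap}.

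First, I would fix $p \in [2,\infty[$ and $u \in \dom A_p$ with $\|u\|_{L^p(\Omega)} = 1$. Since $L^p(\Omega)^* \cong L^{p'}(\Omega)$, the duality map at $u$ is given by the single element $u^* = |u|^{p-2}u$, so that $u^*(A_p u) = \int_\Omega (A_p u)\,|u|^{p-2}\bar u \dd x$. The first step is thus to rewrite this pairing in terms of the form $\forma$. By \cref{lem.core.ap}, the space $\dom A_2 \cap \dom A_p \cap L^\infty(\Omega)$ is a core for $A_p$, and so it suffices to deal with $u$ from that core and pass to the limit at the end: indeed, if $u_n \to u$ in the graph norm of $A_p$, then $|u_n|^{p-2}u_n \to |u|^{p-2}u$ in $L^{p'}(\Omega)$ (the Nemytskii map $z \mapsto |z|^{p-2}z$ is locally Lipschitz from $L^p$ to $L^{p'}$ on bounded sets), whence the pairings converge, and inclusion in the closed set $\overline{\Sigma_{\theta_p}}$ is preserved.

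For $u \in \dom A_2 \cap \dom A_p \cap L^\infty(\Omega)$, consistency of the semigroups (and hence of the generators on the intersection of domains) gives $A_p u = A_2 u$, so the pairing equals $\int_\Omega (A_2 u)\,|u|^{p-2}\bar u \dd x$. To rewrite this as $\forma[u,|u|^{p-2}u]$ via the definition of $A_2$, one needs the test function $|u|^{p-2}u$ to lie in $V$. Since $u \in V \cap L^\infty(\Omega)$ and $\||u|^{p-2}u\|_\infty \le \|u\|_\infty^{p-1}$, truncating the nonlinearity $F(z) = |z|^{p-2}z$ to a bounded, globally Lipschitz function vanishing at $0$ and using properties~\ref{item:assumption.v-sublattice} and~\ref{item:assumption.v-proj} of $V$ (which, combined with the sub-Markovian structure, make $V$ stable under locally Lipschitz nonlinearities of this type applied to bounded elements) shows $|u|^{p-2}u \in V$. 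Thus
\[
  u^*(A_p u) = \int_\Omega (A_2 u)\,\overline{|u|^{p-2}u}\dd x = \forma\bigl[u,|u|^{p-2}u\bigr],
\]
and \cref{thm.numrange.neumann}, applicable because $u \in W^{1,2}(\Omega)\cap L^\infty(\Omega)$, places the right-hand side in $\Sigma_{\theta_p} \subseteq \overline{\Sigma_{\theta_p}}$.

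The main obstacle I anticipate is the verification that $|u|^{p-2}u \in V$ for $u \in (\dom A_2 \cap \dom A_p \cap L^\infty(\Omega))$; this is not entirely formal because $V$ is a closed subspace encoding a trace condition on $D$, rather than an order ideal. The cleanest route is to approximate $F(z) = |z|^{p-2}z$ by $F_n(z) = (|z|\wedge n)^{p-2}z$ (smoothly truncated), use \cref{l-justif} to control gradients, and combine the sublattice property with stability under $u \mapsto (|u|\wedge\one)\sign u$ from~\ref{item:assumption.v-proj} (iterated/rescaled) to keep the approximants in $V$; then pass to the limit in $W^{1,2}$. Once this membership is established, the rest of the argument is a direct combination of \cref{lem.core.ap}, \cref{thm.numrange.neumann}, and continuity of the duality pairing.
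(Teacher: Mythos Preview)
Your overall architecture matches the paper's: reduce to the core $\dom A_2\cap\dom A_p\cap L^\infty(\Omega)$ via \cref{lem.core.ap}, identify the duality pairing with $\forma[u,|u|^{p-2}u]$, apply \cref{thm.numrange.neumann}, then close up by approximation. The continuity of $u\mapsto |u|^{p-2}u$ from $L^p$ to $L^{p'}$ on bounded sets that you invoke is correct and makes the closing step clean.

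The one substantive point is exactly the one you flag: showing $|u|^{p-2}u\in V$ for $u\in V\cap L^\infty(\Omega)$. Here your route diverges from the paper's, and your sketch is not yet a proof. Properties~\ref{item:assumption.v-sublattice} and~\ref{item:assumption.v-proj} by themselves do not hand you stability under $z\mapsto|z|^{p-2}z$; ``iterated/rescaled'' use of $u\mapsto(|u|\wedge\one)\sign u$ gives truncations, and for real nonnegative $u$ one can then recover $(u-s)^+=u-u\wedge s\in V$ and build piecewise affine approximations, but for complex $u$ (where you need the product $|u|^{p-2}u$, not just a power of $|u|$) this line of argument needs considerably more work and you have not indicated how to complete it. The paper does \emph{not} attempt the abstract route: it treats the two concrete choices of $V$ separately. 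For $V=W^{1,2}_D(\Omega)$ one picks approximants $u_n\in\uW^{1,2}_D(\Omega)$ with $u_n\to u$ in $W^{1,2}$, arranges $\|u_n\|_\infty\le\|u\|_\infty+1$, observes that $|u_n|^{p-2}u_n\in\uW^{1,2}_D(\Omega)$ because the support does not grow, and uses boundedness in $W^{1,2}$ (via \cref{l-justif}) together with pointwise convergence to get weak convergence to $|u|^{p-2}u$ in $W^{1,2}$, hence membership in the closed subspace $V$. For $V=\widetilde{W^{1,2}_D(\Omega)}$ one starts the same way from $u_n\in C_D^\infty(\Omega)$, but must additionally show $|u_n|^{p-2}u_n\in V$; this is done by mollifying $|v|^{p-2}v$ on $\R^d$, where $v\in C_c^\infty(\R^d)$ is the extension of $u_n$, so that the mollifications stay in $C_D^\infty(\Omega)$ for small mollification parameter.

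In short: your plan is correct modulo this step, but to complete it you should either carry out the paper's concrete, case-by-case approximation (which is short), or supply a genuine argument that \ref{item:assumption.v-sublattice}--\ref{item:assumption.v-proj} imply $|u|^{p-2}u\in V$ for bounded complex $u$, which is more than what you have written.
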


\begin{proof}
  Let $u\in \dom{A_p} \cap \dom{A_2}\cap L^\infty(\Omega)$
  with $\|u\|_{L^p(\Omega)} = 1$. We show that $|u|^{p-2}u \in V$. Since
  $\dom A_2 \subset V$, we have $u \in V \cap L^\infty(\Omega)$.
  \begin{enumerate}[(a)]
  \item Let first $V = W^{1,2}_D(\Omega)$. Then there exists a
    sequence $(u_n) \subset \uW^{1,2}_D(\Omega)$ such that
    $u_n \to u$ in $W^{1,2}(\Omega)$. Thus, up to a subsequence,
    $u_n \to u$ pointwise almost everywhere. Due to
    $u \in V \cap L^\infty(\Omega)$, we can arrange that the
    approximating sequence is uniformly bounded in
    $L^\infty(\Omega)$,
    $\|u_n\|_{L^\infty(\Omega)} \leq \|u\|_{L^\infty(\Omega)} +
    1$. Since the supports are unchanged,
    $(|u_n|^{p-2}u_n)\subseteq \uW^{1,2}_D(\Omega) \cap
    L^\infty(\Omega)$ and the sequence is uniformly bounded in
    $W^{1,2}(\Omega)$, recall \cref{l-justif}. Thus
    $|u_n|^{p-2}u_n \rightharpoonup |u|^{p-2}u$ in
    $W^{1,2}(\Omega)$ along a subsequence. This implies
    $|u|^{p-2}u \in V$.
  \item Consider next $V =
    \widetilde{W^{1,2}_D(\Omega)}$. Again, there exists a
    sequence $(u_n) \subset C^\infty_D(\Omega)$ such that
    $u_n \to u$ in $W^{1,2}(\Omega)$. As before, it follows that
    $(|u_n|^{p-2}u_n) \subset \uW^{1,2}_D(\Omega) \cap
    L^\infty(\Omega)$ and
    $|u_n|^{p-2}u_n \rightharpoonup |u|^{p-2}u$ in
    $W^{1,2}(\Omega)$ along a subsequence. It remains to show
    that in fact $(|u_n|^{p-2}u_n) \subset V$. Let $n$ be
    fixed. By construction, there is a function $v \in
    C_c^\infty(\R^d)$ such that $v_{|\Omega} = u_n$ and $\supp v
    \cap D = \emptyset$. Now choose a mollifier family
    $(\phi_k)$ and let $v_k \defn |v|^{p-2}v * \phi_k$. Then $v_k \in
    C_c^\infty(\R^d)$ and, for $k$ large enough, $\supp v_k \cap
    D = \emptyset$. Moreover, ${v_k}_{|\Omega} \to |u_n|^{p-2}u_n$
    in $W^{1,2}(\Omega)$. Hence $|u_n|^{p-2}u_n \in V$.
  \end{enumerate}
  Now, with $|u|^{p-2}u \in V$, we finally have
  \[
    u^*(A_pu) = \int_\Omega (A_pu)|u|^{p-2} \overline{u} \ddmu
    = \int_\Omega (A_2u)|u|^{p-2} \overline{u} \ddmu
    = \forma (u,|u|^{p-2} u), 
  \]
  so that, by \cref{thm.numrange.neumann},
  $u^*(A_pu) \in \overline{\Sigma_{\theta_p}}$. The set
  $\dom{A_p} \cap \dom{A_2}\cap L^\infty(\Omega)$ being a core
  for ${A_p}$ by \cref{lem.core.ap}, the claim follows from
  an approximation argument.
\end{proof}

\begin{remark} \label{r-symm} Interestingly, the above
  calculations for the nonsymmetric coefficient function $a$
  also reproduce the estimates for the numerical range in case
  of a \emph{symmetric} coefficient function,
  see~\cite[Theorem~3.9]{Ou04}. In this case, $\theta_2 = 0$,
  and hence $\tan\theta_p = \frac{p-2}{2\sqrt{p-1}}$.
\end{remark}

From \cref{t-numrange-dissipative} we immediately deduce
several corollaries in a standard way; compare with~\cite[Ch.~1,
Theorem 3.9]{Pa83}.

\begin{corollary} \label{cor.resolvent.estimate} For every
  $p\in \mathopen]1,\infty[$ the spectrum of $A_p$ is contained
  in the closed sector $\overline{\Sigma_{\theta_p}}$ and, for
  every $z\in\C\setminus \overline{\Sigma_{\theta_p}}$,
  \begin{equation} \label{e-pazy} \bigl\|(z-A_p )^{-1}
    \bigr\|_{\mathcal L(L^p(\Omega))} \le \frac{1}{\dist
      (z,\Sigma_{\theta_p} )}
  \end{equation}
  with $\theta_p$ as in \cref{t-numrange-dissipative}.
\end{corollary}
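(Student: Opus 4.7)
My plan is to deduce the corollary from \cref{t-numrange-dissipative} via the standard relationship between the numerical range and the resolvent, as formulated in~\cite[Ch.~1, Theorem~3.9]{Pa83}: if $A$ is a densely defined, closed operator on a Banach space $X$ and a connected component $\Sigma$ of the complement of a closed set containing $\overline{w(A)}$ intersects $\rho(A)$, then $\Sigma\subseteq\rho(A)$ and $\|(z-A)^{-1}\|_{\mathcal{L}(X)} \le \dist(z,\overline{w(A)})^{-1}$ throughout $\Sigma$.

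First I would treat the case $p\in [2,\infty[$. Here \cref{t-numrange-dissipative} delivers $w(A_p)\subseteq\overline{\Sigma_{\theta_p}}$ immediately. Since $A_p$ is the negative generator of the contraction $C_0$-semigroup $T_p$, it is closed and densely defined, and the open left half-plane $\{\Re z<0\}$ lies entirely in $\rho(A_p)$. The complement $\C\setminus\overline{\Sigma_{\theta_p}}$ is connected (since $\theta_p<\frac{\pi}{2}$) and meets $\rho(A_p)$ at e.g.\ $z=-1$, so the cited result yields both $\sigma(A_p)\subseteq\overline{\Sigma_{\theta_p}}$ and the estimate~\eqref{e-pazy}.

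The remaining case $p\in\mathopen]1,2[$ I would handle by duality. The $L^p$-adjoint $A_p^*$ on $L^{p'}(\Omega)$ is the realization of the elliptic operator with transposed coefficient matrix $\coa^T$ associated to the same form domain $V$, since the sesquilinear adjoint of $\forma_V$ corresponds precisely to $\coa^T$ (here one uses that $\coa$ is real). A pointwise check via $\langle \coa^T\xi,\xi\rangle=\overline{\langle \coa\xi,\xi\rangle}$ shows $\coa^T$ to be uniformly sectorial with the same angle $\theta_2$; hence applying the previous paragraph to $\coa^T$ with exponent $p'\in\mathopen]2,\infty[$ gives the resolvent bound for $A_p^*$ with sector half-angle $\theta_{p'}$. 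A short algebraic check verifies $\theta_{p'}=\theta_p$. Since $L^p(\Omega)$ is reflexive, the spectra and resolvents of $A_p$ and $A_p^*$ are related by complex conjugation, and since $\Sigma_{\theta_p}$ is symmetric about the real axis, the bound transfers back to $A_p$. The main point that is not entirely mechanical is the identification $A_p^*=A_{p'}^{T}$ in the presence of the mixed boundary conditions encoded by $V$; once this and $\theta_p=\theta_{p'}$ are settled, the conclusion follows.
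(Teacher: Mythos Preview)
Your proposal is correct and follows essentially the same route as the paper: for $p\ge 2$ the paper also derives the lower bound $\|(z-A_p)u\|\ge \dist(z,\Sigma_{\theta_p})\|u\|$ from the numerical range inclusion and then invokes the connectedness argument with $-1\in\rho(A_p)$, while for $p<2$ the paper simply writes ``follows by duality.'' You have merely spelled out the duality step (identifying $A_p^*$ with the $L^{p'}$-realization for $a^T$ and checking $\theta_{p'}=\theta_p$) in more detail than the paper does.
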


\begin{proof}
  Let $p\in [2,\infty[$. By \cref{t-numrange-dissipative}, for every
  $z\in\C\setminus \overline{\Sigma_{\theta_p}}$ and every
  $u\in \dom{A_p}$ with $\| u\|_{L^p(\Omega)} =1$,
  \begin{align*}
    \| (z-A_p) u \|_{L^p(\Omega)} & = \| (z-A_p)u\|_{L^p(\Omega)} \, \|
    u^*\|_{{L^{p}(\Omega)}^*} \\ 
    & \geq 
    \bigl| z\, u^*(u) - u^*(A_p u)\bigr| \\
    & = \bigl| z - u^*(A_p u)\bigr| \\
    & \geq \dist (z,\Sigma_{\theta_p}) \, \| u\|_{L^p(\Omega)} .
  \end{align*}
  This inequality shows that $z-A_p$ is injective and has closed
  range.  Since $-1\in\varrho (A_p)$, a connectedness argument
  yields
  $\C\setminus \overline{\Sigma_{\theta_p}} \subseteq \varrho
  (A_p)$, and then the resolvent estimate follows from the above
  estimate. The case $p\in \mathopen]1,2[$ follows by duality.
\end{proof}

\begin{corollary} \label{cor.angle.analyticity} For every
  $p\in \mathopen]1,\infty[$, the semigroup generated by $-A_p$
  extends to an analytic contraction semigroup on the sector
  $\Sigma_{\frac{\pi}{2}-\theta_p}$, where $\theta_p$ is as in
  \cref{t-numrange-dissipative}.
\end{corollary}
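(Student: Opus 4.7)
The plan is to invoke the standard correspondence between the sharp sectorial resolvent estimate and the generation of an analytic contraction semigroup on the complementary sector. The bound
\[
  \|(z-A_p)^{-1}\|_{\mathcal L(L^p(\Omega))} \le \frac{1}{\dist(z,\Sigma_{\theta_p})}, \qquad z \in \C\setminus \overline{\Sigma_{\theta_p}},
\]
already established in \cref{cor.resolvent.estimate}, has constant precisely $1$. This is exactly the hypothesis of~\cite[Theorem~1.45]{Ou04} (which is cited in the introduction for this very purpose), whose conclusion yields directly that $-A_p$ generates an analytic $C_0$-semigroup of contractions on the sector $\Sigma_{\pi/2-\theta_p}$.

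If one prefers a self-contained argument, the standard route is to define, for $\lambda \in \Sigma_{\pi/2-\theta_p}$, the Dunford integral
\[
  T(\lambda) \defn \frac{1}{2\pi i}\int_\Gamma e^{-\lambda z}\,(z-A_p)^{-1}\,dz,
\]
where $\Gamma$ is a Hankel-type contour enclosing $\overline{\Sigma_{\theta_p}}$ at a slightly larger opening $\theta_p + \delta$ with $\theta_p + \delta + |\arg\lambda| < \pi/2$. The resolvent estimate from \cref{cor.resolvent.estimate} makes the integral absolutely convergent and holomorphic in $\lambda$; agreement with the semigroup $T_p$ on the positive real axis and the semigroup law on the full sector follow by the usual contour-deformation arguments together with the resolvent identity.

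The main obstacle — and the only point where the sharpness of the constant $1$ in the resolvent bound is used — is the contractivity $\|T(\lambda)\| \le 1$. Parametrising the two rays of $\Gamma$, bounding the integrand via the sharp resolvent estimate and performing a scaling change of variables reduces the question to a purely scalar estimate whose supremum equals exactly $1$ because the constant in front of $1/\dist(z,\Sigma_{\theta_p})$ is $1$; any strictly larger constant would yield only a bounded analytic semigroup rather than a contractive one. This is precisely the content of~\cite[Theorem~1.45]{Ou04}, and appealing to it closes the proof.
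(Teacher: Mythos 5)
Your main idea --- deducing the analytic contraction semigroup directly from the resolvent estimate \eqref{e-pazy} with constant exactly $1$, uniformly over all $p\in\mathopen]1,\infty[$ --- is mathematically sound, and in that sense slightly cleaner than the paper's argument, which handles $p\ge 2$ via the Lumer--Phillips theorem \cite[Theorem~1.54]{Ou04} and $p\le 2$ by duality. However, you attribute the implication ``sharp resolvent bound $\Rightarrow$ analytic contraction semigroup'' to \cite[Theorem~1.45]{Ou04}; that result is cited in the introduction only for the correspondence with \emph{bounded} holomorphy (some constant $M$, not $M=1$), and the constant-$1$ refinement is precisely what Lumer--Phillips supplies. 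That is also the reference the paper uses in its own proof of this corollary.

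The more substantive issue is your ``self-contained'' Dunford-integral argument for contractivity, which does not go through. Taking absolute values of $e^{-\lambda z}(z-A_p)^{-1}$ along a Hankel contour and inserting the bound $1/\dist(z,\Sigma_{\theta_p})$ yields an integrand that blows up like $1/|z|$ near the origin, and after any admissible contour regularisation one recovers only \emph{some} finite constant, not $1$; the scaling change of variables you invoke does not fix the constant. The correct way to extract contractivity from the sharp estimate is to rotate and apply Hille--Yosida: for $|\varphi|<\tfrac{\pi}{2}-\theta_p$ and $\lambda>0$, \cref{cor.resolvent.estimate} gives
\[
  \bigl\|(\lambda + e^{i\varphi}A_p)^{-1}\bigr\|_{\mathcal L(L^p(\Omega))}
  = \bigl\|(-e^{-i\varphi}\lambda - A_p)^{-1}\bigr\|_{\mathcal L(L^p(\Omega))}
  \le \frac{1}{\dist\bigl(-e^{-i\varphi}\lambda,\Sigma_{\theta_p}\bigr)} = \frac{1}{\lambda},
\]
because $-e^{-i\varphi}\lambda$ lies at angular distance at least $\tfrac{\pi}{2}$ from $\Sigma_{\theta_p}$. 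Hence $-e^{i\varphi}A_p$ generates a $C_0$-semigroup of contractions, and contractivity on the whole sector $\Sigma_{\frac{\pi}{2}-\theta_p}$ follows. This is exactly what the Lumer--Phillips theorem packages, and it is what you should cite in place of the contour argument.
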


\begin{proof}
  The claim for $p\geq 2$ follows from \cref{cor.resolvent.estimate} and the Lumer-Phillips theorem
  (see~\cite[Theorem~1.54]{Ou04}), and the case $p\leq 2$ follows
  by duality.
\end{proof}

\begin{remark}
  It was already observed in~\cite{CFMP06} that the angle
  $\theta_p$ in the foregoing corollary is optimal. Therefore,
  also \cref{t-numrange-dissipative} and \cref{cor.resolvent.estimate} above are optimal as far as the
  angle is concerned. An example showing the optimality is
  provided by the Ornstein-Uhlenbeck semigroup on the weighted space
  $L^p(\R^d;\mu)$, where $\mu$ is the associated invariant
  Gaussian measure (see~\cite{CFMP05}).
\end{remark}

\begin{corollary}[{\cite[Corollary~10.16]{KuWe04}}]
  \label{cor:hinfty}
  For every $p\in \mathopen]1,\infty[$, the operator $A_p$ has a
  bounded $\hinf$-functional calculus on a sector of angle
  $<\frac{\pi}{2}$.
\end{corollary}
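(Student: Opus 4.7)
The plan is to simply verify that the hypotheses of the cited result~\cite[Corollary~10.16]{KuWe04} are satisfied for $A_p$, so that the statement follows directly by invocation. That corollary asserts the existence of a bounded $\hinf$-functional calculus on a sector of angle strictly less than $\pi/2$ for operators arising as $L^p$-extrapolations of sectorial operators on $L^2$ whose semigroup is sub-Markovian (or more generally, positive and $L^\infty$-contractive).

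First I would record that $A_2$ is the operator associated with the closed, continuous, accretive form $\forma_V$ on the Hilbert space $L^2(\Omega)$. By the sectoriality of the coefficient function $\coa$ (cf.~\eqref{one}), and by \cref{thm.numrange.neumann} applied with $p=2$, the numerical range of $A_2$ lies in $\overline{\Sigma_{\theta_2}}$, so $A_2$ is sectorial on $L^2(\Omega)$ of angle at most $\theta_2 < \pi/2$. Second, as discussed in the paragraph following the definition of $\forma_V$, the assumptions~\ref{item:assumption.v-sublattice} and~\ref{item:assumption.v-proj} together with the realness of $\coa$ make $\forma_V$ a sub-Markovian form, so that the semigroup $T_2$ generated by $-A_2$ is positive and $L^\infty$-contractive and extrapolates consistently to a positive contraction $C_0$-semigroup $T_p$ on $L^p(\Omega)$ for every $p\in[1,\infty[$, with $-A_p$ as its generator.

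These are exactly the structural ingredients required by~\cite[Corollary~10.16]{KuWe04}, which, given a self-adjoint (or sectorial) generator on $L^2$ whose semigroup is positive and $L^\infty$-contractive and which admits an $L^p$-realization for $p \in \mathopen]1,\infty[$, produces a bounded $\hinf$-calculus of some angle $<\pi/2$ on $L^p(\Omega)$. Invoking this result concludes the proof.

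I do not anticipate a genuine obstacle here: the corollary is cited precisely because the entire framework of the present paper (form-induced $A_2$, sub-Markovian structure, $L^p$-extrapolation) has been tailored to fit its hypotheses. The only point one should be a little careful about is that~\cite[Corollary~10.16]{KuWe04} does not provide the sharp angle $\theta_p$ obtained in \cref{t-numrange-dissipative}; it only provides \emph{some} angle strictly less than $\pi/2$, which is all that the statement claims. If one wanted the optimal angle, one would need to combine the resolvent estimate from \cref{cor.resolvent.estimate} with a separate argument, but that is outside the scope of the present corollary.
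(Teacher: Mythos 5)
Your proposal is correct and takes essentially the same approach as the paper, which provides no proof beyond citing \cite[Corollary~10.16]{KuWe04}; you have correctly identified and verified the relevant hypotheses (analyticity of $T_2$ on $L^2$, positivity and $L^\infty$-contractivity, consistent $L^p$-extrapolation) so that the cited result applies directly. The only minor imprecision is the phrase ``self-adjoint (or sectorial) generator'' --- $A_2$ is not self-adjoint here since $\coa$ is nonsymmetric, so it is the analytic/sectorial hypothesis on $L^2$ (together with the sub-Markovian structure on the $L^p$-scale) that is actually being invoked.
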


\begin{remark}
  Regarding \cref{cor:hinfty}, see also~\cite{KaWe01}. If $\theta_p(\hinf) $ denotes the optimal (so,
  smallest) angle for the $\hinf$-functional calculus, then, by~\cite[Corollary~10.12]{KuWe04}, $\theta_2({\hinf}) \leq \theta_2$,
  and by~\cite[Theorem~12.8]{KuWe04},
  $\theta_p({\hinf}) = \theta_p({\mathcal R})$, where the latter
  is the optimal angle of ${\mathcal R}$-sectoriality. For
  $\Omega = \R^d$ it follows from~\cite[Theorem~14.4]{KuWe04} that
  $\theta_p({\hinf}) \leq \theta_p$, and the previous remark then
  again shows that this estimate is optimal.
\end{remark}

From \cref{cor:hinfty}, we also immediately obtain
maximal $L^q$ regularity for the operators $A_p$. We refer to~\cite[Theorem~1.11]{KuWe04}, or to~\cite{La87} for a different
approach. Let us emphasize that there is no regularity
requirement on $\Omega$.

\begin{corollary}
  \label{cor:max-eg}
  For every $1< p$, $q<\infty$, the operator $A_p$ has
  $L^q$-maximal regularity.
\end{corollary}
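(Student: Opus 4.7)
The plan is to deduce maximal $L^q$-regularity from the bounded $\hinf$-functional calculus already established in \cref{cor:hinfty}. First, I would invoke \cref{cor:hinfty} to obtain that $A_p$ admits a bounded $\hinf$-calculus on a sector of angle strictly less than $\frac{\pi}{2}$. A standard Kalton-Weis-type transfer result (see \cite[Theorem~12.8]{KuWe04} together with \cite[Corollary~10.12]{KuWe04}) then promotes this to $\mathcal{R}$-sectoriality of $A_p$ with $\mathcal{R}$-angle strictly below $\frac{\pi}{2}$.

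Next, I would invoke the Weis characterization of maximal $L^q$-regularity on UMD spaces, \cite[Theorem~1.11]{KuWe04}. Since $L^p(\Omega)$ is a UMD space for every $p\in\mathopen]1,\infty[$, this theorem says that any operator on $L^p(\Omega)$ which is $\mathcal{R}$-sectorial with $\mathcal{R}$-angle below $\frac{\pi}{2}$ enjoys $L^q$-maximal regularity for every $q\in\mathopen]1,\infty[$. Applied to $A_p$, this yields the claim.

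Since the heavy lifting has been carried out in \cref{cor:hinfty}, the proof reduces to a citation chain, with the only point to track being that the relevant angle remains strictly less than $\frac{\pi}{2}$ through the successive implications $\hinf$-calculus $\Rightarrow$ $\mathcal{R}$-sectoriality $\Rightarrow$ maximal regularity. There is therefore no substantial obstacle; alternatively, one could invoke \cite[Theorem~1.11]{KuWe04} in one stroke, since a bounded $\hinf$-calculus of angle $<\frac{\pi}{2}$ on a UMD space yields $L^q$-maximal regularity directly, bypassing the intermediate $\mathcal{R}$-sectoriality step.
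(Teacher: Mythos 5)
Your argument matches the paper's own reasoning: the paper deduces $L^q$-maximal regularity directly from \cref{cor:hinfty} (bounded $\hinf$-calculus of angle $<\pi/2$) by citing \cite[Theorem~1.11]{KuWe04}, exactly the ``one stroke'' route you mention at the end. Your intermediate detour through $\mathcal{R}$-sectoriality is a harmless unpacking of the same citation chain, so there is no substantive difference.
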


\section{Ultracontractivity and compact resolvents}
\label{sec:ultr-comp-resolv}
We next consider ultracontractivity of the semigroups $T_p$
generated by $-A_p$ and associated properties. This requires an
assumption on $\Omega$, which is as follows:

\begin{assumption} \label{ass:sobolev-embed} The form domain $V$
  embeds continuously into $L^\embedt(\Omega)$ for some
  $\embedt >2$.
\end{assumption}

In fact, \cref{ass:sobolev-embed} is equivalently an
assumption on ultracontractivity of the semigroups $T_p$
generated by $-A_p$:

\begin{proposition}[{\cite[Theorem~7.3.2]{Ar04}}]
  \label{prop:ultracontract}
  \cref{ass:sobolev-embed} holds true if and
    only if the consistent semigroup family $T_p$ generated by
  $-A_p$ is \emph{ultracontractive}, that is, for all
  $1 \le p < q \le \infty$ there exists a constant $c>0$ such
  that
  \begin{equation} \label{e-ultrac} \|T_p(t)\|_{\mathcal
      L(L^p(\Omega) \to L^q(\Omega))} \le c t^{-\frac
      {\embedt}{\embedt-2}(\frac {1}{p} - \frac {1}{q})} \quad
    (0 < t \le 1).
  \end{equation}
\end{proposition}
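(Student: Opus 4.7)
This is the classical Varopoulos--Nash equivalence between Sobolev embedding of the form domain and ultracontractivity of the associated sub-Markovian semigroup, adapted to our non-symmetric but real-coefficient setting.

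For sufficiency, the plan is to reduce the claim to a Nash-type inequality and then run Nash's differential-inequality argument. Since $\coa$ is real, its symmetric part $\coeffs$ is uniformly elliptic with the same constant $\eta$, so $\Re \forma_V[u,u] \geq \eta \|\nabla u\|_{L^2(\Omega)}^2$. Combined with \cref{ass:sobolev-embed}, this yields a defective Sobolev bound $\|u\|_{L^\embedt(\Omega)}^2 \leq C (\Re \forma_V[u,u] + \|u\|_{L^2(\Omega)}^2)$ for $u \in V$; interpolating $L^2$ between $L^1$ and $L^\embedt$ via H\"older's inequality then gives
\begin{equation*}
  \|u\|_{L^2(\Omega)}^{2 + 4/d} \leq C \bigl(\Re \forma_V[u,u] + \|u\|_{L^2(\Omega)}^2\bigr)\,\|u\|_{L^1(\Omega)}^{4/d}, \qquad d \defn \tfrac{2\embedt}{\embedt - 2},
\end{equation*}
for $u \in V \cap L^1(\Omega)$. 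Since $T_2$ is positive and $L^\infty$-contractive, its $L^1$-extension $T_1$ is a contraction. I would then substitute $u(t) \defn T_2(t) f$ (for $f \in L^1(\Omega) \cap L^2(\Omega)$ with $\|f\|_{L^1(\Omega)} = 1$) into the Nash inequality and exploit the identity $\tfrac{d}{dt} \|u(t)\|_{L^2(\Omega)}^2 = -2 \Re \forma_V[u(t), u(t)]$; after an $e^{-ct}$-type rescaling to absorb the lower-order term, ODE comparison yields $\|T_2(t)\|_{\mathcal L(L^1(\Omega), L^2(\Omega))} \leq C t^{-d/4}$ for $0 < t \leq 1$.

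The transposed matrix $\coa^\top$ satisfies exactly the same hypotheses, so the adjoint semigroup obeys the same $L^1 \to L^2$ bound; dualising gives $\|T_2(t)\|_{\mathcal L(L^2(\Omega), L^\infty(\Omega))} \leq C t^{-d/4}$ as well, and composing via the semigroup property produces $\|T_2(t)\|_{\mathcal L(L^1(\Omega), L^\infty(\Omega))} \leq C t^{-d/2}$. The full family of estimates~\eqref{e-ultrac} for arbitrary $1 \leq p < q \leq \infty$ then follows by Riesz--Thorin interpolation against the contraction bound $\|T_p(t)\|_{\mathcal L(L^p(\Omega))} \leq 1$ together with the consistency of $(T_p)_p$. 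For necessity, I would specialise~\eqref{e-ultrac} to $p = 1$, $q = \infty$ and differentiate $t \mapsto \|T_2(t) f\|_{L^2(\Omega)}^2$ at $t = 0^+$ to recover the Nash inequality for $\Re \forma_V$, which upgrades to the embedding $V \hookrightarrow L^\embedt(\Omega)$ by a standard truncation argument; see \cite[Chapter~7]{Ar04}.

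The main obstacle is the passage from the sectorial but non-symmetric form $\forma_V$ to its symmetric part: the Nash inequality, the semigroup differentiation identity and the Moser-type truncation step all refer to $\Re \forma_V$, whereas the hypothesis and conclusion involve $T_2$ itself. What makes the argument go through is that positivity and $L^\infty$-contractivity of $T_2$---and hence $L^1$-contractivity of $T_1$---are already supplied by the Beurling--Deny-type criteria invoked in \cref{sec.preliminaries}, ultimately from the structural properties \ref{item:assumption.v-sublattice} and \ref{item:assumption.v-proj} of $V$, and are therefore automatically available both for $T_2$ and for its adjoint.
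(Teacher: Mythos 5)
The paper offers no proof of this proposition: it is imported verbatim from Arendt's lecture notes via the citation \cite[Theorem~7.3.2]{Ar04}, so there is no ``paper's own proof'' to compare against. Your sketch is a correct reconstruction of the standard Nash/Coulhon--Varopoulos equivalence that the cited theorem encodes, and the bookkeeping is right: with $d = 2\embedt/(\embedt-2)$, the $L^1\to L^2$ decay $t^{-d/4}$, the $L^1\to L^\infty$ decay $t^{-d/2}$, and the general exponent $\frac{\embedt}{\embedt-2}(\frac1p-\frac1q)$ in~\eqref{e-ultrac} are all consistent, and the reduction of the form to its symmetric part via the lower bound $\Re\forma_V[u,u]\geq\eta\|\nabla u\|_{L^2}^2$ is exactly the right way to feed the non-symmetric form into Nash's machine.

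Two steps are stated imprecisely, both repairable but worth flagging. First, from ``$T_2$ positive and $L^\infty$-contractive'' it does \emph{not} follow directly that the $L^1$-extension $T_1$ is a contraction: by duality, $L^1$-contractivity of $T_2$ is equivalent to $L^\infty$-contractivity of the adjoint $T_2^*$, which is the semigroup associated with $\coa^\top$. You do invoke the transposed matrix a sentence later, and the sub-Markovian criteria (\ref{item:assumption.v-sublattice}, \ref{item:assumption.v-proj}) are stable under transposition, so the conclusion holds, but the justification as phrased is circular. Second, in the necessity direction, ``differentiate $t\mapsto\|T_2(t)f\|_{L^2}^2$ at $t=0^+$'' does not produce a Nash inequality; the derivative at zero only reproduces $-2\Re\forma_V[f,f]$. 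What one actually does is integrate the dissipation identity
\begin{equation*}
  \|f\|_{L^2(\Omega)}^2 = \|T_2(t)f\|_{L^2(\Omega)}^2 + 2\int_0^t \Re\forma_V\bigl[T_2(s)f,\,T_2(s)f\bigr]\dd s ,
\end{equation*}
bound the boundary term by the assumed $L^1\to L^2$ decay, control the integral by $2t\,\Re\forma_V[f,f]$, and optimize over $t\in\mathopen]0,1]$. The control of the integral uses some monotonicity or comparability of $s\mapsto\Re\forma_V[T_2(s)f,T_2(s)f]$, which is immediate for symmetric forms but requires an additional argument (or a constant depending on $\theta_2$) in the sectorial non-symmetric case; this is the one genuinely non-trivial point hidden behind the citation, and absorbing it into ``standard'' glosses over precisely what the non-symmetric setting adds.
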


Note that by a scaling argument we necessarily have
$\embedt \leq 2^\star \defn \frac{2d}{d-2}$ in
\cref{ass:sobolev-embed}, the first-order Sobolev
exponent associated to $2$. 
In this case, in the exponent
in~\eqref{e-ultrac}, $\frac{\beta}{\beta-2} = \frac{d}2$.

\begin{corollary}
  \label{cor:compact-resolv}
  Suppose that \cref{ass:sobolev-embed} holds true and
  that $|\Omega|< \infty$. Then the following holds true for
  $p \in \mathopen]1,\infty[$:
  \begin{enumerate}[(a)]
  \item The embedding $V\hookrightarrow L^2(\Omega)$ is
    compact.\label{cor:compact-resolv-embed}
  \item The resolvents $(z + A_p)^{-1}$ are compact
    operators on $L^p(\Omega)$ for every
    $z \in \varrho(-A_2)$.\label{cor:compact-resolv-compact}
  \item The semigroups $T_p(t)$ are compact operators on
    $L^p(\Omega)$ for every
    $t>0$.\label{cor:compact-resolv-semigroup}
  \item $\sigma(A_2) = \sigma(A_p)$ and the spectral projections
    corresponding to the nonzero eigenvalues are independent of
    $p$.\label{cor:compact-resolv-spectrum}
  \end{enumerate}
\end{corollary}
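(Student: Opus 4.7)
The plan is to first establish compactness of $T_2(t)$ on $L^2(\Omega)$ by a direct Hilbert--Schmidt argument from ultracontractivity and $|\Omega|<\infty$, deduce (a) via the self-adjoint operator associated to the symmetric part of $\coa$, bootstrap (b) and (c) to general $p$ by factoring $T_p(t)$ through $T_2(t/2)$, and finally deduce (d) from consistency of the resolvents together with the $L^\infty$-regularity that ultracontractivity enforces on eigenfunctions.

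The $L^2$-step proceeds as follows. \cref{prop:ultracontract} gives $T_2(t)\colon L^2(\Omega) \to L^\infty(\Omega)$ bounded for every $t>0$; since the adjoint semigroup is generated by the operator associated to $\coa^T$, which satisfies the same hypotheses, duality also provides $T_2(t)\colon L^1(\Omega) \to L^2(\Omega)$ bounded. Composing and invoking the standard Dunford--Pettis argument furnishes a kernel $k_t \in L^\infty(\Omega\times\Omega)$ for $T_2(t)$, which by $|\Omega|<\infty$ lies in $L^2(\Omega\times\Omega)$ as well, so that $T_2(t)$ is Hilbert--Schmidt and hence compact on $L^2(\Omega)$. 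The same reasoning applies verbatim to the symmetric form $\forma^s[u,v]\defn \tfrac12\bigl(\forma[u,v] + \overline{\forma[v,u]}\bigr)$, whose coefficient $\coeffs$ is real, bounded and uniformly elliptic and whose form domain is still $V$; the associated self-adjoint operator $A_2^s$ therefore has compact resolvent. Since $V = \dom\bigl((A_2^s+I)^{1/2}\bigr)$ with equivalent norms, the orthonormal eigenbasis of $A_2^s$ diagonalizes the embedding $V \hookrightarrow L^2(\Omega)$ and the divergence of its eigenvalues forces this embedding to be compact. This is (a).

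For the bootstrap to $L^p$ with $p\in\mathopen]1,\infty[$ and $t>0$, factor $T_p(t) = T_p(t/4) \circ T_2(t/2) \circ T_p(t/4)$. Ultracontractivity together with $|\Omega|<\infty$ makes the outer factors bounded as $L^p(\Omega) \to L^\infty(\Omega) \hookrightarrow L^2(\Omega)$ and $L^2(\Omega) \to L^\infty(\Omega) \hookrightarrow L^p(\Omega)$, respectively, while the middle factor is compact on $L^2(\Omega)$; thus $T_p(t)$ is compact on $L^p(\Omega)$, which is (c). Analyticity of $T_p$ then expresses $(z+A_p)^{-1}$ as a norm-convergent Cauchy integral of $T_p$ for every $z\in\varrho(-A_p)$, transferring compactness to the resolvent; the inclusion $\varrho(-A_2) \subseteq \varrho(-A_p)$ required to match the statement of (b) will come from (d).

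For (d), compact resolvents force $\sigma(A_p)$ to be purely discrete with eigenvalues of finite algebraic multiplicity. An eigenfunction $u\in\dom{A_p}$ with $A_p u = \lambda u$ satisfies $T_p(t) u = e^{-t\lambda} u$, so ultracontractivity places $u \in L^\infty(\Omega) \subseteq \bigcap_{q\in[1,\infty]} L^q(\Omega)$ and the eigenvalue equation persists in every $L^q(\Omega)$; hence $\sigma(A_p) = \sigma(A_q)$ for all $p,q$. The Riesz projection $\tfrac{1}{2\pi i}\oint_\gamma (z+A_p)^{-1}\,\dd z$ around a nonzero eigenvalue $\lambda$ agrees with its $L^q$-counterpart on the dense intersection $L^p(\Omega)\cap L^q(\Omega)$ by consistency of the resolvents, and both project onto the same finite-dimensional generalized eigenspace contained in $L^\infty(\Omega)$, so continuity extends the agreement. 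The main obstacle is (a): compactness of $V \hookrightarrow L^2(\Omega)$ is not a formal consequence of $V \hookrightarrow L^\embedt(\Omega)$ and $|\Omega|<\infty$ alone, and for a nonsymmetric sectorial form the equivalence between compact resolvent and compact form embedding is slightly delicate; the detour through $\forma^s$ sidesteps the issue by providing a direct self-adjoint eigenfunction expansion.
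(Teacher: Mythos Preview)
Your argument is correct, and in the large it parallels the paper's: Hilbert--Schmidt compactness of $T_2(t)$ from ultracontractivity and $|\Omega|<\infty$, factoring $T_p(t)$ through $L^2(\Omega)$ for~(c), and the Davies-type consistency argument for~(d). The genuine differences are in~(a) and~(b), and in the order in which you establish them.

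For~(a) the paper simply invokes a lemma of Daners to the effect that $V\hookrightarrow L^\beta(\Omega)$ with $\beta>2$ and $|\Omega|<\infty$ already forces $V\hookrightarrow L^2(\Omega)$ to be compact, without any reference to the operator. Your detour through the symmetric part $\coeffs$ is a perfectly valid substitute: ultracontractivity of the semigroup generated by the self-adjoint $A_2^s$ gives compact resolvent, and then $V=\dom\bigl((A_2^s+I)^{1/2}\bigr)$ transfers this to compactness of the form embedding. This costs you an auxiliary operator but avoids quoting an external lemma.

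For~(b) the paper goes the other way round: from~(a) it gets $(\lambda+A_2)^{-1}$ compact on $L^2(\Omega)$, and then propagates compactness to $L^p(\Omega)$ by Davies' interpolation result, directly for every $\lambda\in\varrho(-A_2)$. You instead prove~(c) first and recover~(b) from the Laplace representation of the resolvent, which then forces you to appeal to~(d) to see that $\varrho(-A_2)\subseteq\varrho(-A_p)$. Both routes work; the paper's ordering is a bit cleaner because~(b) comes out immediately with the correct resolvent set, while yours has a mild forward reference. Your phrase ``norm-convergent Cauchy integral'' would be better stated as the Laplace integral for $\Re z$ large followed by the resolvent identity.
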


\begin{proof}~\ref{cor:compact-resolv-embed} follows from
  $V \embeds L^\embedt(\Omega)$ as in~\cite[Lemma~7.1]{Dan02}. Thus,
  $(\lambda + A_2)^{-1}$ is a compact operator on $L^2(\Omega)$. By
  compactness propagation via interpolation as
  in~\cite[Theorem~1.6.1]{Da89}, $(\lambda + A_p)^{-1}$ is compact
  for every $\lambda \in \varrho(-A_2)$, which
  is~\ref{cor:compact-resolv-compact}. Ultracontractivity
  implies that $T_2(t)$ is a Hilbert-Schmidt integral operator
  and thus compact on $L^2(\Omega)$ for $t>0$. Thus,~\ref{cor:compact-resolv-semigroup} can be seen from factoring
  $T_p(t)$ through $L^2(\Omega)$, see~\cite[Proposition~7.3.3]{Ar04}.
  Finally,~\ref{cor:compact-resolv-spectrum}
  is~\cite[Corollary~1.6.2]{Da89}.
\end{proof}

\begin{figure}[ht]
  \centering
  \includegraphics[width=0.35\textwidth]{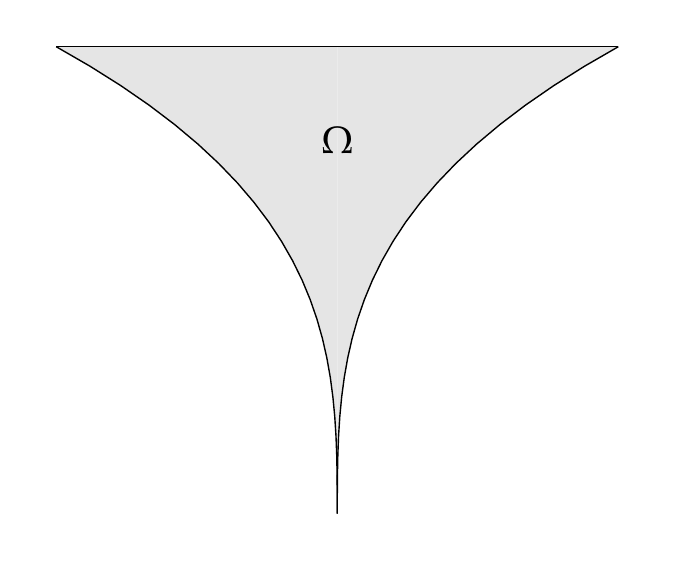}
  \caption{Example of a (non-Sobolev-extension) domain $\Omega$
    satisfying \cref{ass:sobolev-embed}}
  \label{fig:example-domain}
\end{figure}

\begin{remark} \label{rem:domain-cusps-extension} In the case
  $V = W^{1,2}(\Omega)$, so the largest of the form domains
  considered in this work,
  \cref{ass:sobolev-embed} is exhaustively discussed
  in~\cite[Section~6.3.4]{Mz11}. See
  Figure~\ref{fig:example-domain} for the exemplary, two
  dimensional domain
  $\Omega = \{ x \in \R^2 \colon 0<\x_2<1,\, |\x_1| \leq
  \x_2^3\}$ which satisfies \cref{ass:sobolev-embed}
  for $\embedt\leq4$~(\cite{MzPo06}). As visible there, such a
  domain $\Omega$ may have outward cusps, hence it need be
  neither a $d$-set (see~\eqref{eq:d-1} below) nor a
  \emph{homogeneous space}
  (see~\cite[Section~2]{CoWe77}). Therefore $\Omega$ will in
  general not admit a continuous linear extension operator
  $E\colon W^{1,2}(\Omega) \to W^{1,2}(\R^d)$ such that
  $(Eu)_{|\Omega} = u$~(\cite{HjKoTu08}). There may however be
  an continuous linear extension operator $V \to W^{1,2}(\R^d)$,
  see e.g.~\cite[Sect.~6]{EgHDRe15}. The existence of either
  extension operator would imply the optimal $\embedt$ in
  \cref{ass:sobolev-embed}.  Note moreover that there
  might exist bounded extension operators
  $W^{1,2}(\Omega) \to W^{1,r}(\R^d)$ for $r<2$ or even
  $W^{1,2}(\Omega) \to W^{\alpha,2}(\R^d)$ for $\alpha < 1$ for domains
  satisfying \cref{ass:sobolev-embed}.  Conversely,
  for $r$ or $\alpha$ sufficiently large, the existence such an
  extension operator would imply
  \cref{ass:sobolev-embed}. We refer to~\cite{Uk19}
  and the references therein.
\end{remark}

\section{An extension to Robin and dynamical boundary
  conditions}\label{sec:an-extension-robin}

We next show how the generality of the foregoing results, in
particular \cref{thm.numrange.neumann}, can be used to
obtain uniform resolvent estimates for differential operators
attached to more sophisticated problems. To this end, we need
some regularity assumption on $\Omega$ and the boundary part $D$
in order to have a well defined trace-type operator. We
assume that $\Omega$ is bounded throughout this section. The
regularity assumption is as follows.

\begin{assumption} \label{assumption.s}
  \begin{enumerate}[(i)]
  \item For every point
    $x \in \Gamma \defn \overline{\partial\Omega\setminus D}$,
    there is an open neighbourhood $U_x$ of $x$ such that
    $U_x \cap \Omega$ is connected and there exists a continuous
    linear extension operator
    $E \colon W^{1,2}(U_x \cap \Omega) \to W^{1,2}(\R^d)$; that
    is, $(Eu)_{|U_x\cap\Omega} = u$ for every
    $u \in W^{1,2}(U_x\cap\Omega)$.
  \item The set $\Gamma$ is a $(d-1)$-set.
  \end{enumerate}
\end{assumption}

Recall that a Borel set $E \subset \R^d$ is an \emph{$N$-set}
or \emph{$N$-regular} if there exists a constant $c>0$ such that
\begin{equation}\label{eq:d-1}
  c r^{N} \leq \mathcal{H}_{N}\bigl(E \cap B_r(x)\bigr) \leq
  c^{-1} r^{N} \quad (x \in E,~r\leq 1)
\end{equation}
where $\mathcal{H}_N$ denotes the $N$-dimensional Hausdorff
measure. We refer to~\cite[Ch.~II.1]{JoWa84} for more details.

\begin{remark}
  \label{rem:assumption-extension-local}
  The regularity assumption on
  $\Gamma = \overline{\partial\Omega\setminus D}$ in
  \cref{assumption.s} is very mild.  The required
  Sobolev extension property is a deeply researched
  subject. Note that while there $D$ need only be closed, there
  is a condition on the the relative boundary $\partial D$ of
  $D$ within $\partial\Omega$, so the transition region between
  Dirichlet and Neumann boundary parts. Particular cases in
  which \cref{assumption.s} is satisfied include the
  one where there are Lipschitz charts available around
  $\Gamma$, or, more generally,
  when $\Omega$ is locally an $(\eps,\delta)$-domain around
  $\Gamma$. The latter is in fact
  optimal for $d=2$. We refer to~\cite[Section~6.4]{EgHDRe15}
  for more information.
\end{remark}

The immediate consequences of \cref{assumption.s}
needed in the following are as follows:
\begin{enumerate}[(a)]
\item There is a bounded linear extension
  operator 
  which extends both $\widetilde{W^{1,2}_D(\Omega)}$ and
  $W^{1,2}_D(\Omega)$ to
  $W^{1,2}_D(\R^d)$~(\cite[Theorem~6.9]{EgHDRe15}). In particular,
  these spaces coincide.
\item There is a well defined trace map
  $\tr \colon W^{1,2}_D (\Omega) \to
  L^\beta(\Gamma;\mathcal{H}_{d-1})$, where $\beta>2$
  (\cite{Bi09a}).
\end{enumerate}
Hence, for nonnegative
$b\in L^\infty(\Gamma;\mathcal{H}_{d-1})$, the form
$\formb \colon V\times V\to\C$ given by
\[
  \formb (u,v) \defn \forma (u,v) + \int_\Gamma b\,(\tr u)
  (\overline{\tr v}) \; d{\mathcal H}_{d-1} \quad (u,v\in V) ,
\]
is well defined, continuous, closed and accretive. In fact, it
is even a sub-Markovian form. The operator $B_2$ on
$L^2(\Omega)$ associated with this form is the negative
generator of an analytic contraction semigroup $S_2$ which
extends consistently to contraction semigroups $S_p$ on all
$L^p(\Omega)$-spaces, $p\in [1,\infty [$. The negative
generator of $S_p$ is denoted by $B_p$. All these properties
follow as in Section~\ref{sec.preliminaries}.

The operators $B_p$ are realizations of the second order
elliptic operator $-\dive (a \, \nabla\cdot )$ with mixed
Dirichlet and Robin boundary conditions. The corresponding
parabolic evolution problem associated with $B_p$ is formally
\begin{align*}
  u_t - \dive (a \, \nabla u ) & = f && \text{in } (0,\infty) \times \Omega , \\
  u & = 0 && \text{on } (0,\infty ) \times D , \\
  \langle a\nabla u , \nu\rangle + bu& = 0 && \text{on }  (0,\infty ) \times (\partial\Omega\setminus D) , \\
  u (0,\cdot )& = u_0 && \text{in } \Omega , 
\end{align*}
where $\nu$ is the unit outer normal. That is, one has
Dirichlet boundary conditions on $D$ and Robin boundary
conditions on $\partial\Omega\setminus D$, which reduce to
Neumann boundary conditions on the set $[b=0]$.

Since
\begin{equation*}
  \int_\Gamma b \, (\tr u) (\tr |u|^{p-2} \bar{u}) \; \ud
  {\mathcal H}_{d-1} = \int_\Gamma b \tr(|u|^p) \; \ud {\mathcal
    H}_{d-1} \geq 0
\end{equation*}
for every $u\in V\cap L^\infty(\Omega)$, by \cref{thm.numrange.neumann}, the numerical range of the operator
$B_p$ is contained in the same sector as the numerical range of the
operator $A_p$. From \cref{t-numrange-dissipative} and
the proof of \cref{cor.angle.analyticity}, we thus
obtain the following result. (Ultracontractivity is inferred
from the $W^{1,2}$-extension property of $V$, see
Remark~\ref{rem:domain-cusps-extension}, and
\cref{prop:ultracontract}.)

\begin{theorem}
  For every $p\in [2,\infty [$, the numerical range of $B_p$ is
  contained in the sector $\overline{\Sigma_{\theta_p}}$, where
  $\theta_p$ is as in \cref{t-numrange-dissipative}. Moreover, for every
  $p \in \mathopen]1,\infty [$,
  \begin{equation} \label{e-pazy-robin} \bigl\|(z-B_p )^{-1}
    \bigr\|_{\mathcal L(L^p(\Omega))} \le \frac{1}{\dist
      (z,\Sigma_{\theta_p} )}
  \end{equation}
  for every $z\in\C\setminus \overline{\Sigma_{\theta_p}}$ and
  the semigroup generated by $-B_p$ extends to an analytic
  contraction semigroup on the sector
  $\Sigma_{\frac{\pi}{2}-\theta_p}$ and is ultracontractive.
\end{theorem}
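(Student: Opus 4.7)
The plan is to reduce each assertion to its counterpart for $A_p$ established in \cref{s-numrange}, exploiting that the additional boundary contribution in $\formb$ is real and nonnegative on the test pairs $(u,|u|^{p-2}u)$, and finally to feed \cref{assumption.s} into \cref{prop:ultracontract} for ultracontractivity.

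First I would copy the proof of \cref{lem.core.ap} verbatim---it relies only on consistency of the semigroup family and the density of $L^1(\Omega) \cap L^\infty(\Omega)$ in $L^p(\Omega)$---to obtain that $\dom B_2 \cap \dom B_p \cap L^\infty(\Omega)$ is a core for $B_p$. Next, fix $p \in [2,\infty[$ and take $u$ in this core with $\|u\|_{L^p(\Omega)} = 1$. The argument in \cref{t-numrange-dissipative} producing $|u|^{p-2}u \in V$ depends only on the approximation structure of $V$ (by $\uW^{1,2}_D(\Omega)$ or $C_D^\infty(\Omega)$) and so applies verbatim. Since $B_p u = B_2 u$ on $\dom B_p \cap \dom B_2$ by consistency, the defining identity of $B_2$ gives
\begin{equation*}
  u^\ast(B_p u) = \formb\bigl[u, |u|^{p-2}u\bigr] = \forma\bigl[u, |u|^{p-2}u\bigr] + \int_\Gamma b \, \tr(|u|^p) \, \ud\mathcal{H}_{d-1}.
\end{equation*}
By \cref{thm.numrange.neumann} the first summand lies in $\overline{\Sigma_{\theta_p}}$, and since $b \geq 0$ the boundary term is nonnegative real. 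As $\overline{\Sigma_{\theta_p}}$ is closed under the addition of nonnegative reals, the sum remains in the sector, and density of the core in $\dom B_p$ propagates the inclusion $w(B_p) \subseteq \overline{\Sigma_{\theta_p}}$.

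From this numerical range inclusion the resolvent bound~\eqref{e-pazy-robin} follows for $p \geq 2$ exactly as in the proof of \cref{cor.resolvent.estimate}: injectivity with closed range, together with a connectedness argument starting from $-1 \in \varrho(B_p)$, gives surjectivity on the connected component $\C \setminus \overline{\Sigma_{\theta_p}}$ and the claimed norm estimate. The range $p \in \mathopen]1,2[$ is covered by duality, the dual operator being of the same form for the transposed coefficient function. Analyticity and contractivity of the semigroup on $\Sigma_{\pi/2 - \theta_p}$ then follow from Lumer--Phillips as in \cref{cor.angle.analyticity}. For ultracontractivity I would verify \cref{ass:sobolev-embed} for $V$: the $W^{1,2}$-extension property recalled immediately after \cref{assumption.s} embeds $V$ continuously into $W^{1,2}(\R^d)$, hence, by the classical Sobolev embedding, into $L^\beta(\Omega)$ for some $\beta > 2$. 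Then \cref{prop:ultracontract} delivers the ultracontractive bounds.

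The only mildly delicate point is the membership $|u|^{p-2}u \in V$ for elements $u$ of the chosen core; but this is precisely what is shown in parts~(a) and~(b) of the proof of \cref{t-numrange-dissipative}, and it is purely a statement about the space $V$, independent of the form or the boundary contribution. Everything else is a direct transport of the arguments from \cref{s-numrange}, relying on the elementary observation that adding a nonnegative real number cannot push a point out of a closed sector containing $[0,\infty[$.
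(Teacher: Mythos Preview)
Your proposal is correct and matches the paper's own argument essentially line for line: the paper observes that the boundary term $\int_\Gamma b\,\tr(|u|^p)\,\ud\mathcal{H}_{d-1}$ is nonnegative, invokes \cref{thm.numrange.neumann} for the $\forma$-part, and then transports the numerical-range, resolvent, and analyticity conclusions from \cref{t-numrange-dissipative} and the proof of \cref{cor.angle.analyticity}, with ultracontractivity coming from the $W^{1,2}$-extension property via \cref{prop:ultracontract}. You have simply made explicit the intermediate steps (the core lemma for $B_p$ and the membership $|u|^{p-2}u\in V$) that the paper leaves implicit by referring back to the proofs in \cref{s-numrange}.
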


It is also possible to treat dynamical boundary conditions in
this approach. Fix a measurable subset $S\subseteq\Gamma$. Then
the embedding
$j \colon V \to L^2(\Omega) \times L^2(S;\mathcal{H}_{d-1})$
defined by $u\mapsto (u,\tr u)$ is continuous, injective and has
dense range, see~\cite[Lemma 2.10]{ElMeRe14}. Via this
embedding, the form $(\formb , V)$ induces also an operator
$\hat{B}_2$ on the Hilbert space
$H = L^2(\Omega) \times L^2(S;\mathcal{H}_{d-1})$. By~\cite[Proposition 2.16]{ElMeRe14}, the form $\formb$ is
sub-Markovian, so that $-\hat{B}_2$ generates an analytic
contraction semigroup $\hat{S}_2$ which extends consistently to
contraction semigroups $\hat{S}_p$ on all
$L^p(\Omega) \times L^p(S;\mathcal{H}_{d-1})$-spaces,
$p\in [1,\infty [$. The negative generator of $\hat{S}_p$ is
denoted by $\hat{B}_p$.

The corresponding parabolic evolution problem associated with
$\hat{B}_p$ is formally
\begin{align*}
  u_t -\dive (a \, \nabla u ) & = f && \text{in } (0,\infty)\times \Omega , \\
  u& = 0& & \text{on } (0,\infty)\times D , \\
  u_t + \langle a\nabla u , \nu\rangle + bu &= g& & \text{on } (0,\infty) \times S ,\\
  \langle a\nabla u , \nu\rangle + bu &= 0 && \text{on } (0,\infty) \times (\partial\Omega\setminus (D \cup S)) ,\\
  u (0,\cdot )& = u_0 && \text{in } \Omega ,
\end{align*}
that is, one has Dirichlet boundary conditions on $D$, dynamical
boundary conditions on $S$, and Robin boundary conditions on
$\partial\Omega\setminus (D\cup S)$, which reduce to Neumann
boundary conditions on the set $[b=0]$. Since $\hat{B}_p$ is
again fundamentally linked to the form $\forma$, the result
about the numerical range transfers immediately from
\cref{thm.numrange.neumann}. Regarding
ultracontractivity, we refer to continuity of the trace operator
$\tr \colon V \to L^\beta(S;\mathcal{H}_{d-1})$ where $\beta>2$
and the reasoning in~\cite[Lemma~2.19]{ElMeRe14}.

\begin{theorem}
  For every $p\in [2,\infty [$, the numerical range of
  $\hat{B}_p$ is contained in the sector
  $\overline{\Sigma_{\theta_p}}$, where $\theta_p$ is as in
  \cref{t-numrange-dissipative}. Moreover, for every
  $p \in \mathopen]1,\infty[$,
  \begin{equation} \label{e-pazy-dyn} \bigl\|(z-\hat{B}_p )^{-1}
    \bigr\|_{\mathcal L(L^p(\Omega))} \le \frac{1}{\dist
      (z,\Sigma_{\theta_p} )}
  \end{equation}
  for every $z\in\C\setminus \overline{\Sigma_{\theta_p}}$ and
  the semigroup generated by $-\hat{B}_p$ extends to an analytic
  contraction semigroup on the sector
  $\Sigma_{\frac{\pi}{2}-\theta_p}$ and is ultracontractive.
\end{theorem}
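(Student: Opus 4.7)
The structure parallels the preceding theorem for $B_p$: once $w(\hat{B}_p) \subseteq \overline{\Sigma_{\theta_p}}$ is established for $p \in [2,\infty[$, the resolvent estimate \eqref{e-pazy-dyn} follows verbatim from the argument of \cref{cor.resolvent.estimate} (with the case $p \in \mathopen]1,2[$ handled by duality against the adjoint form, which falls under the same analysis), and analyticity on $\Sigma_{\frac{\pi}{2}-\theta_p}$ comes from Lumer--Phillips as in \cref{cor.angle.analyticity}. Ultracontractivity is supplied by the reasoning in \cite[Lemma~2.19]{ElMeRe14}: under \cref{assumption.s} the trace $\tr \colon V \to L^\beta(S;\mathcal{H}_{d-1})$ is continuous for some $\beta > 2$, which together with the $V \hookrightarrow L^\beta(\Omega)$-embedding (available via the $W^{1,2}$-extension property) and the $L^\infty$-contractivity of $\hat{S}_p$ (since $\formb$ is sub-Markovian on $H$) yields a kernel estimate of the type \eqref{e-ultrac} on the product space.

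The heart of the matter is thus the sector containment. I would first mimic \cref{lem.core.ap} to show that $\dom \hat{B}_2 \cap \dom \hat{B}_p \cap \bigl(L^\infty(\Omega) \times L^\infty(S;\mathcal{H}_{d-1})\bigr)$ is a core for $\hat{B}_p$; the consistent-resolvent argument via the Laplace transform carries over without change. For such an element $U = j(u) = (u,\tr u)$ with $u \in V \cap L^\infty(\Omega)$ and $\|U\|_{L^p\times L^p} = 1$, the unique normalized duality element is, up to a positive scalar, $U^* = \bigl(|u|^{p-2}u,\, |\tr u|^{p-2}\tr u\bigr)$. Exactly the $V$-approximation argument from the proof of \cref{t-numrange-dissipative} gives $|u|^{p-2}u \in V$, and a density / trace-continuity argument gives $\tr(|u|^{p-2}u) = |\tr u|^{p-2}\tr u$, so that $U^* = j(|u|^{p-2}u)$. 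Consequently
\[
  U^*(\hat{B}_p U) \;=\; \formb\bigl[u, |u|^{p-2}u\bigr] \;=\; \forma\bigl[u,|u|^{p-2}u\bigr] + \int_\Gamma b \, |\tr u|^p \, \ud\mathcal{H}_{d-1}.
\]
By \cref{thm.numrange.neumann} the first summand lies in $\overline{\Sigma_{\theta_p}}$, while the second is a nonnegative real number because $b \geq 0$, so the sum stays in $\overline{\Sigma_{\theta_p}}$.

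The main technical nuisance I anticipate is the boundary trace identification $\tr(|u|^{p-2}u) = |\tr u|^{p-2}\tr u$, which is what legitimizes $U^* \in j(V)$ and permits the form to be evaluated in the computation above. Under \cref{assumption.s}, the space $V \cap C(\overline{\Omega})$ is dense in $V$ (via extension to $W^{1,2}_D(\R^d)$ and mollification), the map $u \mapsto |u|^{p-2}u$ is continuous on $W^{1,2}(\Omega) \cap L^\infty(\Omega)$ by \cref{l-justif}, and $\tr$ is continuous into $L^\beta(\Gamma;\mathcal{H}_{d-1})$, so the pointwise identity valid for continuous $u$ passes to the limit. Beyond this bookkeeping, no new analytic input beyond \cref{thm.numrange.neumann} and \cref{assumption.s} is required; the rest is a direct transfer of the arguments already used for $A_p$ and $B_p$.
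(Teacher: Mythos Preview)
Your proposal is correct and follows the same approach as the paper, which does not give a formal proof but simply notes in the paragraph preceding the theorem that ``the result about the numerical range transfers immediately from \cref{thm.numrange.neumann}'' and refers to \cite[Lemma~2.19]{ElMeRe14} for ultracontractivity. You have in fact supplied more detail than the paper does---in particular the core argument, the identification $U^* = j(|u|^{p-2}u)$ via the trace identity $\tr(|u|^{p-2}u) = |\tr u|^{p-2}\tr u$, and the reduction to \cref{thm.numrange.neumann}---all of which the paper subsumes under ``transfers immediately.''
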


\begin{remark}
  \label{rem:dynbound-surface}
  It would also be possible to include a $(d-1)$-regular
  hyperplane $\Sigma \subset \Omega$ in $S$ in a straightforward
  manner. This would then lead to a dynamic ``jump condition''
  \begin{equation*}
    u_t + \bigl[\langle \forma\nabla u,\nu_\Sigma\rangle\bigr] = h \quad
    \text{on}~(0,\infty) \times \Sigma.
  \end{equation*}
  We refer to~\cite{ElMeRe14}.
\end{remark}

\section{Appendix: Intrinsic characterization for the form
  domain}

In this section, we give a completely \emph{intrinsic}
characterization for $V=W^{1,2}_D(\Omega)$, corresponding to the
philosophy in~\cite{Tr16}, see especially Remark~4 there. In
fact, we do so for the full scale $W^{1,p}_D (\Omega)$ with
$p \in \mathopen]1,\infty[$. We suppose that $\Omega$ is bounded
and let $p\in \mathopen]1,\infty[$ be fixed throughout this
section. The characterization is given under following very mild
assumption on $\partial\Omega$ and $D$ which we assume to hold
for the rest of this appendix:

\begin{assumption} \label{assumption.v3}
  \begin{enumerate}[(i)]
  \item For every point $x \in \partial D$, the relative
    boundary of $D$ within $\partial\Omega$, there is an open
    neighbourhood $U_x$ of $x$ such that $U_x \cap \Omega$ is
    connected and there exists a continuous linear extension
    operator
    $E \colon W^{1,p}(U_x \cap \Omega) \to W^{1,p}(\R^d)$; that
    is, $(Eu)_{|U_x\cap\Omega} = u$ for all
    $u\in W^{1,p}(U_x \cap \Omega)$.
  \item The boundary $\partial \Omega$ and the set $D$ itself
    are $(d-1)$-sets.
  \end{enumerate}
\end{assumption}


\begin{remark}\label{rem:intrinsic-assumption}
  Comparing to \cref{assumption.s}---which we do
  \emph{not} suppose to hold for this section---, the Sobolev
  extension condition is required \emph{only} on the relative
  boundary $\partial D$ of $D$ within $\partial\Omega$. Thus,
  the remaining part of $\partial\Omega\setminus \partial D$
  might be highly irregular in a topological sense. We do
  however suppose the measure-theoretic condition that
  $\partial \Omega$ and $D$ are $(d-1)$-regular in
  \cref{assumption.v3}, which is not included in
  \cref{assumption.s} and which effectively means that
  $\Gamma = \partial\Omega\setminus D$ is also $(d-1)$-regular.
\end{remark}

For 
closed $E \subseteq \partial\Omega$, we define the spaces
\[
  \uW^{1,p}_E (\Omega) \defn \bigl\{ u\in W^{1,p} (\Omega) \st
  \dist (\supp u , E) >0 \bigr\},
\]
and
\[
  C^{\infty}_E (\Omega) \defn \bigl\{ u \in C^\infty(\Omega)
  \st u = v|_\Omega~\text{for}~v\in C^\infty_c (\R^d ), \, \supp
  v\cap E = \emptyset \bigr\},
\]
and their closures in $W^{1,p} (\Omega)$:
\[
  W^{1,p}_E(\Omega) \defn \overline{\uW^{1,p}_E (\Omega
    )}^{W^{1,p}(\Omega)} \text{ and } \widetilde{W^{1,p}_E
    (\Omega)}\defn \overline{C^\infty_E (\Omega)}^{W^{1,p}} .
\]
We have already seen the latter two spaces in the previous
sections in the special case $p=2$. The characterization of
$W^{1,p}_D(\Omega)$ is as follows. (We use $\dist_D(x) \defn \dist(x,D)$.)

\begin{theorem}\label{thm:intrin-charact} Let 
  $u \in W^{1,p}(\Omega)$. The following are equivalent.
  \begin{enumerate}[(i)]
  \item\label{thm:intrin-charact-space}
    $u \in W^{1,p}_D(\Omega)$.
  \item\label{thm:intrin-charact-hardy}
    $u/\dist_D \in L^p(\Omega)$.
  \item\label{thm:intrin-charact-trace} For
    $\mathcal{H}_{d-1}$-almost every\ $x \in D$,
    \begin{equation*}
      \lim_{r\searrow0} \frac1{|B_r(x)|} \int_{B_r(x)\cap\Omega}
      |u| = 0.
    \end{equation*}
  \end{enumerate}
\end{theorem}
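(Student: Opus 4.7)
My plan is to establish the equivalence as the cycle (i) $\Rightarrow$ (ii) $\Rightarrow$ (iii) $\Rightarrow$ (i), relying on three classical tools: a Hardy inequality with respect to $D$, a differentiation theorem for mutually singular Radon measures, and the theory of $(1,p)$-quasi-continuous representatives together with a Bagby-type characterization of trace-zero Sobolev functions. The regularity hypotheses of \cref{assumption.v3}, in particular the $(d-1)$-regularity of $D$ and $\partial\Omega$, will enter each step through the capacitary thickness of $D$. Note that once (i) $\Leftrightarrow$ (ii) is known, the chain closes and the missing link is the subtle passage from the pointwise condition (iii) back to the Sobolev approximation condition (i).

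For (i) $\Rightarrow$ (ii), every $u \in \uW^{1,p}_D(\Omega)$ has $\dist(\supp u,D)>0$, so $u/\dist_D$ is bounded and hence trivially in $L^p(\Omega)$. The key quantitative ingredient is the Hardy inequality
\[
  \|u/\dist_D\|_{L^p(\Omega)} \leq C\|u\|_{W^{1,p}(\Omega)}, \quad u \in \uW^{1,p}_D(\Omega),
\]
which under \cref{assumption.v3} follows from the Lewis--Wannebo inequality, since $(d-1)$-regularity implies uniform $p$-fatness of $D$. By density, this inequality extends to all of $W^{1,p}_D(\Omega)$. The converse (ii) $\Rightarrow$ (i) is a direct cutoff argument: the functions $\eta_n \defn \min(1,\tfrac{n}{2}\dist_D)$ satisfy $\eta_n u \in \uW^{1,p}_D(\Omega)$ and $\eta_n u \to u$ in $W^{1,p}(\Omega)$, because the only nontrivial error term $u\nabla\eta_n$ is pointwise dominated by $C\,(|u|/\dist_D)\,\chi_{\{\dist_D\leq 2/n\}}$, whose $L^p$-norm vanishes as $n\to\infty$ by integrability of $u/\dist_D$.

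For (ii) $\Rightarrow$ (iii), for $x\in D$ and $y \in B_r(x)\cap\Omega$ we have $\dist_D(y) \leq |y-x| \leq r$, so $|u(y)| \leq r\,|u(y)|/\dist_D(y)$, and therefore
\[
  \frac{1}{|B_r(x)|}\int_{B_r(x)\cap\Omega}|u| \;\leq\; \frac{C}{r^{d-1}}\int_{B_r(x)\cap\Omega} \frac{|u|}{\dist_D}.
\]
Consider the finite Radon measures $\tilde\mu \defn (|u|/\dist_D)\,\chi_\Omega\,\mathcal{L}^d$ and $\mu \defn \mathcal{H}_{d-1}$ restricted to $D$. Since $\tilde\mu$ is concentrated on $\Omega$ while $\mu$ is concentrated on $D\subseteq\partial\Omega$, we have $\tilde\mu\perp\mu$. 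The $(d-1)$-regularity of $D$ yields $\mu(B_r(x))\asymp r^{d-1}$ and the Vitali covering property, so the Besicovitch--Radon--Nikodym differentiation theorem gives $\tilde\mu(B_r(x))/\mu(B_r(x)) \to 0$ for $\mu$-a.e.\ $x\in D$. Combined with the estimate above this yields (iii).

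For (iii) $\Rightarrow$ (i), which is the most delicate direction, I would invoke $(1,p)$-capacity theory. Every $u \in W^{1,p}(\Omega)$ admits a $(1,p)$-quasi-continuous representative $\tilde u$, defined $(1,p)$-quasi-everywhere via convergence of Lebesgue averages, and a classical theorem of Bagby (with refinements by Hedberg and Adams--Hedberg) identifies $W^{1,p}_D(\Omega)$ as those $u$ with $\tilde u = 0$ $(1,p)$-q.e.\ on $D$. Under \cref{assumption.v3} and $p>1$, the $(d-1)$-regularity of $D$ ensures that $\mathcal{H}_{d-1}$-null subsets of $D$ are $(1,p)$-null, so (iii) forces $\tilde u(x)=0$ at $(1,p)$-q.e.\ $x\in D$, and Bagby's theorem delivers $u\in W^{1,p}_D(\Omega)$. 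The main obstacle is precisely this final step: matching the $\mathcal{H}_{d-1}$-a.e.\ Lebesgue-point condition to $(1,p)$-quasi-everywhere vanishing on a merely $(d-1)$-regular set and invoking the Bagby-type density result in the minimal regularity setting of \cref{assumption.v3}, where no smoothness of $\partial\Omega$ beyond $(d-1)$-regularity is assumed. The Hardy inequality in the first step requires similar care but is well-documented for $(d-1)$-regular sets.
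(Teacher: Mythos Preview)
Your route is genuinely different from the paper's. The paper does not prove any of the three implications from first principles. Instead, via a Christ-type dyadic decomposition of the $(d-1)$-regular set $\partial\Omega$, it manufactures an auxiliary closed $(d-1)$-set $\Upsilon\subseteq\partial\Omega$ with $\dist(\Upsilon,D)>0$ such that every point of $\overline{\partial\Omega\setminus(D\cup\Upsilon)}$ admits a $W^{1,p}$-extension neighbourhood (\cref{lem:regular-mantle} and \cref{cor:regular-mantle}). A cut-off $\eta$ with $\eta\equiv 1$ near $D$ and $\eta\equiv 0$ near $\Upsilon$ then reduces all three conditions for $u$ to the corresponding conditions for $\eta u$ with respect to the enlarged Dirichlet set $D\cup\Upsilon$, and for that set the full equivalence is quoted from Egert--Tolksdorf (\cref{prop:moritz-patrick-charact}). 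The paper's own contribution is thus the geometric ``regular mantle'' lemma; the analytic content (Hardy inequality, trace characterisation) is outsourced to the cited result, which is applicable precisely because the construction of $\Upsilon$ upgrades the geometry to one where extension is available everywhere it is needed.

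Two of your steps are solid: the differentiation argument for (ii)~$\Rightarrow$~(iii) is clean, and the cut-off argument for (ii)~$\Rightarrow$~(i) works once you repair the choice of $\eta_n$ (your $\eta_n=\min(1,\tfrac{n}{2}\dist_D)$ is strictly positive on all of $\Omega$, so $\eta_n u$ never lies in $\uW^{1,p}_D(\Omega)$; take e.g.\ $\eta_n=\min(1,\max(0,n\dist_D-1))$ instead). The other two steps, however, are genuine gaps rather than routine citations. The Lewis--Wannebo pointwise Hardy inequality is a statement on $\R^d$; to import it for $u\in\uW^{1,p}_D(\Omega)$ you would need to extend $u$ to a $W^{1,p}(\R^d)$-function still vanishing near $D$, and \cref{assumption.v3} grants extension neighbourhoods only around the relative boundary $\partial D$, not around the relative interior of $D$---this missing extension is exactly what the paper's set $\Upsilon$ is engineered to supply. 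Likewise, the Bagby--Hedberg identification of $W^{1,p}_D(\Omega)$ with a quasi-continuous-vanishing class is a full-space theorem and presupposes the same extension device. So your acknowledged ``main obstacle'' (iii)~$\Rightarrow$~(i) and your ``well-documented'' step (i)~$\Rightarrow$~(ii) both run into the same structural difficulty that the paper's geometric reduction is designed to circumvent.
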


\begin{remark}
  \label{rem:hardy-inequality}
  If one and thus all of the conditions in
  \cref{thm:intrin-charact} hold true, then we have a
  \emph{Hardy inequality} for elements of $W^{1,p}_D(\Omega)$:
  \[
    \left( \int_\Omega \left| \frac{u}{\dist_D} \right|^p
    \right)^{\frac{1}{p}} \lesssim \| u\|_{W^{1,p} (\Omega)}
    \quad (u \in W^{1,p}_D(\Omega)).
  \]
  In particular,
  \begin{equation*}
    u \mapsto \| u\|_{W^{1,p}(\Omega)} + \left\| \frac{u}{\dist_D}
    \right\|_{L^p(\Omega)}
  \end{equation*}
  is an equivalent norm on
  $W^{1,p}_D(\Omega)$. 
\end{remark}

A consequence of the characterization of $W^{1,p}_D(\Omega)$ in
\cref{thm:intrin-charact} is that the constant one
function $\one$ is not an element of that space if $D\neq\emptyset$. The proof
follows after the one of \cref{thm:intrin-charact} below.

\begin{corollary}
  \label{cor:one-function}
  Let $\one \in W^{1,p}(\Omega)$ denote the constant one
  function. If $D\neq\emptyset$, then $\one \notin W^{1,p}_D(\Omega)$.
\end{corollary}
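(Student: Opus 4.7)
The plan is to use the characterization from \cref{thm:intrin-charact}, specifically the equivalence (i)$\Leftrightarrow$(ii). If $\one \in W^{1,p}_D(\Omega)$, then by (ii) the function $\one/\dist_D = \dist_D^{-1}$ must belong to $L^p(\Omega)$. I will derive a contradiction by showing that whenever $D$ is nonempty, $\int_\Omega \dist_D(x)^{-p}\,\dd x = \infty$.

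By the layer-cake identity,
\[
\int_\Omega \dist_D^{-p} = p \int_0^\infty t^{-p-1} \bigl|\{x\in\Omega \st \dist_D(x) < t\}\bigr| \,\dd t ,
\]
the problem reduces to the geometric lower bound $|\{x\in\Omega \st \dist_D(x) < t\}| \geq c\,t$ for small $t>0$. Indeed, with this bound the integrand behaves like $t^{-p}$ near zero, which is non-integrable for any $p \geq 1$.

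For the geometric bound, I would use the $(d-1)$-regularity of $D$ from \cref{assumption.v3}(ii). A standard covering argument---cover $D$ by roughly $t^{1-d}$ balls of radius $t$ centered on $D$---yields that the full tubular neighborhood $N_t(D) = \{x \in \R^d \st \dist_D(x) < t\}$ has $d$-dimensional Lebesgue measure of order $t$. The main obstacle is to restrict to $\Omega$, i.e., to show $|\Omega \cap N_t(D)| \gtrsim t$. This uses that $D \subseteq \partial\Omega$ and that $\partial\Omega$ itself is $(d-1)$-regular: intuitively, $\Omega$ lies on one ``side'' of the $(d-1)$-regular hypersurface $\partial\Omega$ and cannot degenerate against its boundary in a measure-theoretic sense. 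A possible rigorous route is to construct, via a Vitali-type selection, disjoint interior balls $B_{s_k}(y_k) \subset \Omega$ with $y_k \to D$ and $s_k$ comparable to $\dist(y_k,\partial\Omega)$, organize them by dyadic scale $s_k \sim 2^{-k}$, and use the $(d-1)$-regularity of $D$ to control how many balls appear at each scale; the resulting series diverges for $p \geq 1$.

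As an alternative, one can avoid the layer-cake computation by invoking criterion (iii) of the theorem: for $u = \one$, condition (iii) would demand that $\Omega$ has Lebesgue density zero at $\mathcal{H}_{d-1}$-a.e.\ $x \in D$. This fails because at $\mathcal{H}_{d-1}$-a.e.\ point of a $(d-1)$-regular boundary, $\Omega$ has strictly positive upper density. Both approaches share the same core difficulty, namely quantifying the two-sidedness of $\Omega$ along its $(d-1)$-regular boundary.
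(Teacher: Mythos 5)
You correctly identify the reduction: via \cref{thm:intrin-charact}, either condition~(ii) or~(iii) must fail for $\one$, and both routes boil down to showing that $\Omega$ does not degenerate measure-theoretically against $D$ --- i.e.\ that $\Omega$ has a nondegenerate volume fraction near (a positive-$\mathcal{H}_{d-1}$-measure piece of) $D$. Up to there the plan is sound.

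The gap is in the geometric claim on which both your routes rest. You assert that ``at $\mathcal{H}_{d-1}$-a.e.\ point of a $(d-1)$-regular boundary, $\Omega$ has strictly positive upper density'', and your layer-cake route needs the equivalent statement $|\Omega \cap N_t(D)| \gtrsim t$. Neither of these follows from the $(d-1)$-regularity of $\partial\Omega$ alone. A set like $\Omega = \{ (x_1,x_2) : x_2 > |x_1|^{1/2} \}$ near the origin in $\R^2$ has $1$-regular boundary but Lebesgue density zero at the cusp tip; by accumulating such cusps one can, in principle, make $\Omega$ lose density on a large piece of a $(d-1)$-regular boundary. The heuristic that $\Omega$ ``lies on one side'' of $\partial\Omega$ and so cannot degenerate is exactly what fails for inward cusps, and the Vitali-type selection you sketch does not supply the missing lower bound on the number of comparably-sized interior balls per dyadic scale without some further hypothesis on $\Omega$.

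The paper supplies that hypothesis from a different direction. Take a single point $x \in \partial D$. \cref{assumption.v3}(i) provides a neighbourhood $U_x$ in which $U_x\cap\Omega$ has the $W^{1,p}$-extension property, and by the theorem of Haj{\l}asz, Koskela and Tuominen such extension domains are necessarily $d$-regular, which yields $|B(y,r)\cap\Omega\cap U_x| \gtrsim r^d$ uniformly for $y$ near $x$. This bounds the density ratio from below on $B(x,r_0)\cap D$, contradicting condition~(iii) there unless $\mathcal{H}_{d-1}(B(x,r_0)\cap D)=0$, which the $(d-1)$-regularity of $D$ excludes. Note the localization: the paper never needs positive density at a.e.\ point of $D$, only on a single ball around a point of $\partial D$ --- which matters, because the extension property in \cref{assumption.v3} is assumed \emph{only} near $\partial D$, not along all of $D$. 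Your argument would need a density statement on essentially all of $D$, which the assumptions simply do not give. To repair the proposal you would have to import the $d$-regularity of extension domains (HjKoTu08) exactly as the paper does, at which point you recover the paper's argument.
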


We next prove a preliminary geometric lemma which will allow us to
prove \cref{thm:intrin-charact} by reducing it to a
similar characterization theorem in a more regular situation,
\cref{prop:moritz-patrick-charact} below. It says
that a subset of a regular set can be extended to a regular set
in an arbitrarily small manner.  We state and prove it for a
general bounded $N$-regular set $\Lambda$. The proof relies on a
sort of dyadic decomposition for regular sets established by
David and refined by Christ and is given at the very end of the
paper.

\begin{lemma}
  \label{lem:regular-mantle}
  Let $\Lambda \subset \R^d$ be bounded and $N$-regular. Let
  further $\Xi \subseteq \Lambda$ and $\rho > 0$. Then there exists
  an $N$-set $\Xi^\bullet$ such that
  $\Xi \subseteq \Xi^\bullet \subseteq \Lambda$ and
  $\sup \{\dist(x,\Xi) \st x \in \Xi^\bullet\setminus\Xi\} \leq
  \rho$.
\end{lemma}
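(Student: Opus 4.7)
The plan is to invoke the David--Christ dyadic decomposition for bounded $N$-regular sets and to enlarge $\Xi$ by adjoining all ``dyadic cubes'' at a scale fine enough that their diameter is at most $\rho$. Concretely, for every $k\in\N$ one obtains a family $\{Q_\alpha^k\}_\alpha$ of Borel subsets of $\Lambda$ forming a partition of $\Lambda$ modulo $\mathcal{H}_N$-null sets, nested across scales, with $\diam Q_\alpha^k \leq C_1\, 2^{-k}$ and such that each $Q_\alpha^k$ contains a set of the form $B(z_\alpha^k, c_0\, 2^{-k}) \cap \Lambda$.

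Fix $k_0\in\N$ so that $C_1\, 2^{-k_0}\leq \rho$, let $\mathcal{F} \defn \{Q_\alpha^{k_0} \st Q_\alpha^{k_0} \cap \Xi \neq \emptyset\}$, and set $\Xi^\bullet \defn \Xi \cup \bigcup_{Q \in \mathcal{F}} Q$. Clearly $\Xi \subseteq \Xi^\bullet \subseteq \Lambda$, and any $x \in \Xi^\bullet \setminus \Xi$ lies in some $Q \in \mathcal{F}$ together with a point $y \in \Xi \cap Q$, whence $\dist(x,\Xi) \leq \abs{x-y} \leq \diam Q \leq \rho$, which is the mantle bound.

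It remains to check that $\Xi^\bullet$ is itself $N$-regular. The upper estimate is immediate from $\Xi^\bullet \subseteq \Lambda$ and the $N$-regularity of $\Lambda$. For the lower estimate, given $x\in\Xi^\bullet$ and $r\leq 1$, I would split into two regimes. If $r \geq C_1\, 2^{-k_0}$, any $Q \in \mathcal{F}$ containing (or very close to) $x$ satisfies $\diam Q \lesssim r$, and contributes $\mathcal{H}_N$-mass at least $c_0^N\,2^{-k_0 N}$; since $r\leq 1$, this is $\gtrsim r^N$ with a constant depending on $\rho$ and the regularity data of $\Lambda$. If $r < C_1\, 2^{-k_0}$, the nestedness of the decomposition allows the descent, from the cube $Q\in\mathcal F$ containing $x$, to a cube $Q'$ at a finer scale $k_1 > k_0$ chosen with $C_1\,2^{-k_1}\leq r/2$, so that $x\in Q'\subseteq Q \subseteq\Xi^\bullet$ and $Q'\subseteq B_r(x)$; the inner-ball property then gives $\mathcal{H}_N(Q') \gtrsim 2^{-k_1 N} \gtrsim r^N$.

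The main obstacle is that Christ's cubes partition $\Lambda$ only modulo $\mathcal{H}_N$-null sets, so the argument above strictly yields the lower bound only at $\mathcal{H}_N$-a.e.\ $x \in \Xi^\bullet$, whereas the definition of an $N$-set demands the estimate at every point. To close this gap I would apply a standard perturbation: for arbitrary $x\in\Xi^\bullet$ and $r$, pick $y\in B_{r/4}(x)\cap\Xi^\bullet$ in the ``good'' full-measure set, apply the a.e.\ bound at $y$ on scale $r/2$, and use $B_{r/2}(y)\subseteq B_r(x)$ to transfer the estimate to $x$ with an extra factor of $2^{N}$. Existence of such a $y$ reduces to the fact that the good set has positive $\mathcal{H}_N$-mass inside $B_{r/4}(x)\cap\Xi^\bullet$, which follows by applying the same Christ-cube construction to a point of $\Xi$ or of a selected cube of $\mathcal F$ lying within $B_{r/8}(x)$.
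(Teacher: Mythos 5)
Your proposal follows the same route as the paper's proof: invoke the Christ dyadic decomposition of $\Lambda$, enlarge $\Xi$ by adjoining all cubes at a single scale fine enough that their diameters are below $\rho$, get the upper $N$-regularity estimate for free from $\Xi^\bullet\subseteq\Lambda$, and establish the lower estimate in two regimes (coarse $r$ via the inner-ball property of the fixed generation, fine $r$ by descending through nested cubes), together with a pass-to-a-nearby-good-point argument to upgrade the bound from $\mathcal{H}_N$-a.e.\ points to all of $\Xi^\bullet$. The paper formalizes that last step by reducing to the full-measure set $\Upsilon$ of points lying in a cube of \emph{every} generation and then transferring the estimate via $B(x,r/2)\subseteq B(z,r)$; your perturbation scheme with $y\in B_{r/4}(x)$ is precisely this idea, and the circularity you worry about is resolved exactly as in the paper, by using $N$-regularity of $\Lambda$ (not of $\Xi^\bullet$) to guarantee that $\Lambda\setminus\Upsilon$, being $\mathcal{H}_N$-null, cannot fill any ball $B(z,r/2)\cap\Lambda$.
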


\begin{corollary}
  \label{cor:regular-mantle}
  There exists a closed $(d-1)$-set
  $\Upsilon \subseteq \partial\Omega$ such that
  $\dist(\Upsilon,D) > 0$ and for every point
  $x\in\overline{ \partial\Omega \setminus (D\cup \Upsilon )}$
  there is an open neighbourhood $U_x$ of $x$ such that
  $U_x \cap \Omega$ is connected and there exists a continuous
  linear extension operator
  $E \colon W^{1,p}(U_x \cap \Omega) \to W^{1,p}(\R^d)$.
\end{corollary}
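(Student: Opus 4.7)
The plan is to use \cref{lem:regular-mantle} to absorb the points of $\partial\Omega$ at which the local Sobolev extension property in \cref{assumption.v3}(i) fails into a $(d-1)$-regular set $\Upsilon$ disjoint from $D$. First, I would observe that the set of points $x\in\partial\Omega$ admitting such a $U_x$ is relatively open in $\partial\Omega$: if $U_x$ works for $x$, then $U_x$ is itself an admissible neighbourhood for every point of $U_x\cap\partial\Omega$. By \cref{assumption.v3}(i), this open set contains $\partial D$, hence its complement $F\subseteq\partial\Omega$---the failure set of the extension property---is closed and disjoint from $\partial D$.

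To separate $F$ from $D$ itself, let $D^\circ$ denote the relative interior of $D$ in $\partial\Omega$ and set $F' \defn F\setminus D^\circ$. Then $F'$ remains closed, and the decomposition $D=\partial D\cup D^\circ$ together with $F\cap\partial D=\emptyset$ gives $F'\cap D=\emptyset$. Since $\partial\Omega$ is compact, $\dist(F',D)>0$. Choose $\delta$ with $0<2\delta<\dist(F',D)$ and form the closed relative $\delta$-neighbourhood $F'_\delta \defn \{x\in\partial\Omega:\dist(x,F')\leq\delta\}$, which still satisfies $\dist(F'_\delta,D)>\delta$.

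The key step is to apply \cref{lem:regular-mantle} with $\Lambda=\partial\Omega$---a $(d-1)$-set by \cref{assumption.v3}(ii)---and $\Xi=F'_\delta$, choosing $0<\rho<\delta$. This produces the closed $(d-1)$-set $\Upsilon\defn\Xi^\bullet$ with $F'_\delta\subseteq\Upsilon\subseteq\partial\Omega$; by the displacement bound in the lemma and the triangle inequality, every $x\in\Upsilon$ satisfies $\dist(x,F')\leq\delta+\rho<2\delta<\dist(F',D)$, whence $\dist(\Upsilon,D)\geq\dist(F',D)-(\delta+\rho)>0$.

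It remains to verify the extension property on $\overline{U}$, where $U\defn\partial\Omega\setminus(D\cup\Upsilon)$. Since $U\cap F'_\delta=\emptyset$, every $y\in U$ satisfies $\dist(y,F')>\delta$, and by continuity of $\dist(\cdot,F')$ this passes to the limit, giving $\overline{U}\cap F'=\emptyset$. For any $x\in\overline{U}$, either $x$ lies in the open set where the extension property holds and we are done, or $x\in F\setminus F'=F\cap D^\circ\subseteq D$; in the latter case $x$ has a $\partial\Omega$-open neighbourhood contained in $D$, hence disjoint from $U$, contradicting $x\in\overline{U}$. Thus the extension property holds at every point of $\overline{U}$. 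I expect the main subtlety to be accommodating the possibility that the extension property fails at some interior points of $D$, which is why I trim $F$ to $F'$; the $\delta$-thickening before invoking \cref{lem:regular-mantle} is then essential to keep $F'$ strictly inside $\Upsilon$ so that $\overline{U}$ cannot reach $F'$.
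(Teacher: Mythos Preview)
Your argument is correct, with one small omission: \cref{lem:regular-mantle} does not assert that $\Xi^\bullet$ is closed, so you should set $\Upsilon\defn\overline{\Xi^\bullet}$ (the closure of a $(d-1)$-set is again a $(d-1)$-set, cf.\ the paper's citation of~\cite[Ch.~VIII Proposition~1]{JoWa84}); the displacement bound and hence your distance estimate survive under closure.

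The route you take is different from the paper's. The paper works from the \emph{good} side: it covers $\partial D$ by finitely many extension neighbourhoods, fixes $\eps>0$ so that the $3\eps$-tube around $\partial D$ is still covered, sets $C\defn\{x\in\partial\Omega\setminus D:\dist(x,\partial D)\geq 2\eps\}$, and then applies \cref{lem:regular-mantle} to $C$ to obtain $\Upsilon$. You work from the \emph{bad} side: isolate the failure set $F$, trim off $D^\circ$ to get $F'$ disjoint from $D$, pre-thicken to $F'_\delta$, and only then apply \cref{lem:regular-mantle}. Both arguments hinge on the same lemma, but your pre-thickening step makes the verification of $\dist(\Upsilon,D)>0$ completely transparent via the triangle inequality, whereas in the paper's version one must additionally observe that $\overline{C}\cap D=\emptyset$ (limit points of $C$ cannot land in $D^\circ$) and that the fattening parameter is small relative to $\dist(\overline{C},D)$. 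Your explicit handling of possible failure points inside $D^\circ$ is also a nice touch that the paper's formulation sidesteps by never looking at $D^\circ$ at all.
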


\begin{proof}
  By \cref{assumption.v3}, for every
  $x \in \partial D$ there exists an open $W^{1,p}$-extension
  neighbourhood $U_x$ of $x$.  The family
  $(U_x)_{x\in \partial D}$ is then an open covering of
  $\partial D$. By compactness, it thus admits a finite
  subcovering $(U_{x_j})_j$.
  
  Now choose $\eps >0$ such that
  $\{x \in \R^d \st \dist (x,\partial D) < 3\eps \} \subseteq
  \bigcup_j U_{x_j}$, and set
  $C \defn \{ x\in\partial\Omega \setminus D \st \dist
  (x,\partial D) \geq 2\eps\}$. Clearly, for every
  $x \in \partial\Omega\setminus\overline{(D\cup C)}$ there is
  an open $W^{1,p}$-extension neighbourhood. Let $C^\bullet$ be a
  $(d-1)$ regular set containing $C$ with
  $\sup \{\dist(x,C) \st x \in C^\bullet\setminus C\} \leq \eps$
  and define $\Upsilon \defn \overline{C^\bullet}$; such a set
  exists by \cref{lem:regular-mantle}. Then $\Upsilon$ has
  the required properties; for the $(d-1)$ property,
  see~\cite[Ch.~VIII Proposition~1]{JoWa84}.
\end{proof}

With the foregoing result, we can now make use of the
characterisation of a zero trace property for a more regular
situation in~\cite{EgTo17} which we quote adapted to our
setting:

\begin{proposition}[{\cite[Theorem~2.1]{EgTo17}}]
  \label{prop:moritz-patrick-charact}
  Let $\Upsilon\subseteq\partial\Omega$ be a closed $(d-1)$-set
  such that for every point
  $x\in\overline{ \partial\Omega \setminus (D\cup \Upsilon )}$
  there is an open neighbourhood $U_x$ of $x$ such that
  $U_x \cap \Omega$ is connected and there exists a continuous
  linear extension operator
  $E \colon W^{1,p}(U_x \cap \Omega) \to W^{1,p}(\R^d)$.
  Let 
  $u \in W^{1,p}(\Omega)$. Then the following are equivalent:
  \begin{enumerate}[(i)]
  \item $u \in \widetilde{W^{1,p}_{D\cup\Upsilon}(\Omega)}$.
  \item $u/\dist_{D\cup\Upsilon} \in L^p(\Omega)$.
  \item For $\mathcal{H}_{d-1}$-a.e.\ $x \in D \cup \Upsilon$,
    \begin{equation*}
      \lim_{r\searrow0} \frac1{|B_r(x)|} \int_{B_r(x)\cap\Omega}
      |u| = 0.
    \end{equation*}
  \end{enumerate}
\end{proposition}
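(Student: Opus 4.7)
The plan is to prove the cyclic chain (i) $\Rightarrow$ (ii) $\Rightarrow$ (i), then (ii) $\Leftrightarrow$ (iii). Set $F\defn D\cup\Upsilon$; by hypothesis $F$ is a bounded $(d-1)$-set and the local $W^{1,p}$-extension property holds at every point of $\overline{\partial\Omega\setminus F}$.

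For (i) $\Rightarrow$ (ii), I would take an approximating sequence $(u_k)\subset C^\infty_F(\Omega)$ with $u_k\to u$ in $W^{1,p}(\Omega)$. Each $u_k$ vanishes in a neighbourhood of $F$, so $u_k/\dist_F$ is bounded and belongs to $L^p$ trivially. A Whitney decomposition $\{Q_j\}$ of $\R^d\setminus F$ with $\ell(Q_j)\sim\dist(Q_j,F)$ together with a local Poincar\'e inequality on each $Q_j\cap\Omega$, and the $(d-1)$-regularity of $F$ (which converts the sum into an $A_p$-type weighted estimate for $\dist_F^{-p}$), yields a \emph{uniform} Hardy estimate $\|u_k/\dist_F\|_{L^p(\Omega)}\lesssim\|u_k\|_{W^{1,p}(\Omega)}$. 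Passing to the limit by Fatou produces (ii).

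For (ii) $\Rightarrow$ (i), I would cut off and mollify. Define $\eta_\eps(x)\defn\chi(\dist_F(x)/\eps)$ where $\chi\in C^\infty(\R)$ vanishes near $0$ and equals $1$ on $[1,\infty)$, so that $u\eta_\eps\in W^{1,p}(\Omega)$ has support at distance $\geq\eps/2$ from $F$. Using $|\nabla\eta_\eps|\lesssim \eps^{-1}\mathbf{1}_{\{\dist_F\leq\eps\}}$ together with (ii),
\[
\int_\Omega|u\nabla\eta_\eps|^p\;\lesssim\;\int_{\{\dist_F\leq\eps\}}\Bigl|\frac{u}{\dist_F}\Bigr|^p\;\longrightarrow\;0
\]
by dominated convergence, hence $u\eta_\eps\to u$ in $W^{1,p}(\Omega)$. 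To approximate each $u\eta_\eps$ by an element of $C^\infty_F(\Omega)$, I would cover the compact set $\supp(u\eta_\eps)\cap\overline\Omega$ by finitely many interior balls and the Sobolev-extension neighbourhoods $U_x$ supplied by the hypothesis on $\overline{\partial\Omega\setminus F}$, subordinate a partition of unity, locally extend to $W^{1,p}(\R^d)$ on each chart, and mollify at scale smaller than $\dist(\supp(u\eta_\eps),F)$. The resulting sequence lies in $C^\infty_F(\Omega)$, and a diagonal argument in $\eps$ and the mollification parameter closes the implication.

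For (ii) $\Leftrightarrow$ (iii): H\"older with $\dist_F(y)\leq r$ on $B_r(x)\cap\Omega$ for $x\in F$ gives
\[
\frac{1}{|B_r(x)|}\int_{B_r(x)\cap\Omega}|u|\;\lesssim\;r^{1-d/p}\Bigl(\int_{B_r(x)\cap\Omega}\Bigl|\frac{u}{\dist_F}\Bigr|^p\Bigr)^{1/p},
\]
which, combined with Lebesgue differentiation of the finite measure $|u/\dist_F|^p\dd x$ against $\mathcal{H}_{d-1}|_F$ (which is doubling by $(d-1)$-regularity) and the Sobolev embedding effect hidden in the Hardy norm, implies (iii). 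Conversely, to derive (ii) from (iii), I would repeat the Whitney decomposition, then for each cube $Q_j$ chain through neighbouring Whitney cubes down to a shadow point $x_j\in F$ and telescope the averages $u_{Q_j}$: the Poincar\'e step on each cube produces a gradient term with bounded overlap constant, while (iii) at the shadow point (valid $\mathcal{H}_{d-1}$-a.e.) kills the telescope endpoint, and the $(d-1)$-regularity of $F$ guarantees that enough shadow points exist to cover the chain lengths with the correct geometric weights. The main obstacle is this last implication (iii) $\Rightarrow$ (ii), whose Whitney-chaining / pointwise-Hardy technology on general $(d-1)$-regular sets is the technical core of \cite{EgTo17}.
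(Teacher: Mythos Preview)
The paper does not prove this proposition: it is quoted verbatim as \cite[Theorem~2.1]{EgTo17} and used as a black box in the proof of \cref{thm:intrin-charact}. There is therefore no proof in the paper to compare your proposal against.

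That said, your sketch is broadly in the spirit of the argument in \cite{EgTo17}. A few remarks on the proposal itself. In (i)~$\Rightarrow$~(ii), the Whitney decomposition is of $\R^d\setminus F$, but the Poincar\'e inequality you invoke is on $Q_j\cap\Omega$; for cubes near $\overline{\partial\Omega\setminus F}$ this intersection need not support a Poincar\'e inequality unless you first extend across that part of the boundary using the assumed local extension operators --- this is exactly where the hypothesis on $\overline{\partial\Omega\setminus F}$ enters, and you should make that explicit. In (ii)~$\Rightarrow$~(iii), the displayed H\"older bound gives decay only of order $r^{1-d/p}$, which is negative for $p<d$; the actual argument needs the maximal-function or capacity characterisation of Sobolev trace zero rather than a raw H\"older estimate. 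You correctly identify (iii)~$\Rightarrow$~(ii) as the hard direction and defer to the pointwise-Hardy machinery; that is accurate, but note that the chaining must again be carried out after extension to $\R^d$, since the Whitney cubes of $\R^d\setminus F$ may protrude outside $\Omega$.
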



\begin{proof}[Proof of \cref{thm:intrin-charact}]
  Choose $\Upsilon\subseteq\partial\Omega$ as in
  \cref{cor:regular-mantle} and a cut-off function
  $\eta\in C^\infty_c (\R^d )$ such that
  $\supp \eta \cap \Upsilon = \emptyset$ and $\eta = 1$ in a
  neighbourhood of $D$. Write $u = (1-\eta)u + \eta u$. Clearly,
  $(1-\eta )u \in \uW^{1,p}_D(\Omega)$, so
  $(1-\eta)u/\dist_D \in L^p(\Omega)$ and
  \begin{equation*}
    \lim_{r\searrow0} \frac1{|B_r(x)|} \int_{B_r(x)\cap\Omega}
    |(1-\eta)u| = 0.
  \end{equation*}
  It is thus sufficient to prove all equivalences for $\eta u$
  only.
  
  (\ref{thm:intrin-charact-space}~$\implies$~\ref{thm:intrin-charact-hardy}):
  Let $\eta u\in W^{1,p}_D (\Omega
  )$. 
  Choose a sequence $(u_k) \subset \uW^{1,p}_D(\Omega)$
  approximating $u$ in $W^{1,p}(\Omega)$. We have
  $\eta u_k/\dist_{D\cup\Upsilon} \in L^p(\Omega)$ and
  $\eta u_k \to \eta u$ in
  $W^{1,p}(\Omega)$. \cref{prop:moritz-patrick-charact}
  implies that the set of $v \in W^{1,p}(\Omega)$ satisfying
  $v/\dist_{D\cup\Upsilon} \in L^p(\Omega)$ is \emph{closed} in
  $W^{1,p}(\Omega)$. Hence $\eta u/\dist_D \in L^p(\Omega)$.

  (\ref{thm:intrin-charact-hardy}~$\implies$~\ref{thm:intrin-charact-space}):
  Let $\eta u/\dist_D \in L^p(\Omega)$. Then
  $\eta u/\dist_{D\cup\Upsilon} \in L^p(\Omega)$ and
  $\eta u \in \widetilde{W^{1,p}_{D\cup\Upsilon}(\Omega)}$ by
  \cref{prop:moritz-patrick-charact}. In particular,
  $\eta u$ in $W^{1,p}(\Omega)$ can be approximated by a
  sequence of functions from
  $C_{D\cup\Upsilon}^\infty(\Omega)$. But
  $C_{D\cup\Upsilon}^\infty(\Omega) \subset
  \uW^{1,p}_D(\Omega)$, so $\eta u \in W^{1,p}(\Omega)$.
  
  (\ref{thm:intrin-charact-hardy}~$\iff$~\ref{thm:intrin-charact-trace}):
  Apply \cref{prop:moritz-patrick-charact} to
  $\eta u$.
\end{proof}

\begin{proof}[Proof of \cref{cor:one-function}]
  Suppose that $\one \in W^{1,p}_D(\Omega)$. Then, by
  \cref{thm:intrin-charact},
  \begin{equation}
    \lim_{r\searrow0} \frac1{|B(y,r)|} \int_{B(r,y) \cap \Omega}
    \one = \lim_{r\searrow0} \frac{|B(y,r) \cap
      \Omega|}{|B(y,r)|} = 0\label{eq:constant-one-trace}
  \end{equation}
  for $\mathcal{H}_{d-1}$-almost every $y \in D$. We will show
  that this leads to a contradiction.
  
  Let $x \in \partial D$, the relative boundary of $D$ within
  $\partial\Omega$. By \cref{assumption.v3}, there
  exists an open neighbourhood $U_x$ of $x$ such that
  $U_x \cap \Omega$ has the $W^{1,p}$-extension property. A
  domain with the $W^{1,p}$-extension property is necessarily
  $d$-regular by a fundamental result by Haj{\l}asz, Koskela and
  Tuominen~\cite{HjKoTu08}, so there is a constant $c>0$ such
  that
  \begin{equation*}
    |B(y,r) \cap \Omega \cap U_x| \geq c r^d \quad (y \in
    \Omega \cap U_x,~r\leq 1).
  \end{equation*}
  This property also holds for
  $y \in \partial\Omega \cap U_x \subset \partial(\Omega \cap
  U_x)$. Indeed, let $r\leq1$ and choose
  $z \in B(y,r/2) \cap \Omega \cap U_x$. Then
  $B(z,r/2) \cap \Omega \cap U_x \subset B(y,r) \cap \Omega\cap
  U_x$, hence
  \begin{equation}\label{eq:meas-dens-bound}
    |B(y,r) \cap \Omega \cap U_x|  \geq c2^{-d}r^d \quad (y \in
    \partial\Omega \cap U_x,~r\leq 1).
  \end{equation}

  Now let $0 < r_0 \leq 1$ be such that $B(x,2r_0) \subset
  U_x$. Consider $y \in B(x,r_0) \cap D$.  Then
  $B(y,r_0) \subset U_x$, so there is a constant $c_0>0$ such
  that
  \begin{equation*}
    \frac{|B(y,r) \cap
      \Omega|}{|B(y,r)|} = \frac{|B(y,r) \cap
      \Omega \cap U_x|}{|B(y,r)|} \geq c_0 > 0 
  \end{equation*}
  for all $r \leq r_0$
  by~\eqref{eq:meas-dens-bound}. By~\eqref{eq:constant-one-trace},
  this is only possible if
  $\mathcal{H}_{d-1}(B(x,r_0) \cap D) = 0$. But $D$ is
  $(d-1)$-regular, so $\mathcal{H}_{d-1}(B(x,r_0)\cap D) >
  0$. This is the contradiction.
\end{proof}

\subsection*{Proof of \cref{lem:regular-mantle}}

In this final subsection, we prove
\cref{lem:regular-mantle}. As already mentioned above, the
proof relies on the following Christ decomposition for regular
sets:

\begin{theorem}[{\cite[Theorem~11]{MichaelChrist1990}}]
  \label{thm:dyadic-regular}
  Let $\Lambda \subset \R^d$ be bounded and $N$-regular. Then
  there exists a collection of open sets
  $\{Q_\alpha^k \subseteq \Lambda \st k \in \N_0,~\alpha \in
  I_k\}$, where $I_k$ is an index set for every $k \in \N_0$,
  and constants $\delta\in\mathopen]0,1[$, $a_0 >0$,
  $c_1 < \infty$ such that the following hold true:
  \begin{enumerate}[(i)]
  \item\label{thm:dyadic-regular-exhaust}
    $\mathcal{H}_{N}\bigl(\Lambda \setminus \bigcup_{\alpha\in
      I_k} Q_\alpha^k\bigr) = 0$ for every $k \in \N_0$,
  \item if $\ell,k \in \N_0$ and $\ell \geq k$, then for every
    $\alpha \in I_k$ and $\beta \in I_\ell$, either
    $Q_\beta^\ell \cap Q_\alpha^k = \emptyset$ or
    $Q_\beta^\ell \subseteq
    Q_\alpha^k$,\label{thm:dyadic-regular-hierarchy}
  \item if $k \in \N_0$ and $\alpha,\beta \in I_k$, then
    $Q_\beta^k \cap Q_\alpha^k =
    \emptyset$,\label{thm:dyadic-regular-hierarchy-disjoint}
  \item for every $k \in \N_0$ and $\alpha \in I_k$, there holds
    $\diam(Q_\alpha^k) \leq c_1
    \delta^k$,\label{thm:dyadic-regular-size}
  \item for every $k \in \N_0$ and $\alpha \in I_k$, there is
    $z_\alpha^k \in \Lambda$ such that
    $B(z_\alpha^k,a_0\delta^k) \cap \Lambda \subseteq
    Q_\alpha^k$.\label{thm:dyadic-regular-fat}
  \end{enumerate}
\end{theorem}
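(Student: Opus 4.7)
The plan is to follow the classical construction of \cite{MichaelChrist1990} and build the families $\{Q_\alpha^k\}$ scale-by-scale from maximal separated nets in $\Lambda$. Fix a small parameter $\delta \in \mathopen]0,1[$ (to be pinned down at the end). For each $k \in \N_0$, choose a maximal $\delta^k$-separated subset $\{z_\alpha^k\}_{\alpha \in I_k}$ of $\Lambda$; boundedness of $\Lambda$ renders $I_k$ finite, and maximality forces the covering property $\min_\alpha d(x, z_\alpha^k) \leq \delta^k$ for every $x \in \Lambda$. Using this covering property at level $k-1$, one then assigns to each $\alpha \in I_k$ ($k\geq 1$) a parent $\pi(\alpha) \in I_{k-1}$ satisfying $d(z_\alpha^k, z_{\pi(\alpha)}^{k-1}) \leq \delta^{k-1}$.

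Next I would define the cubes inductively in such a way that the hierarchy is baked in. Set
\[
  Q_\alpha^0 \defn \bigl\{x \in \Lambda \st d(x, z_\alpha^0) < d(x, z_\beta^0)~\text{for all}~\beta \neq \alpha\bigr\},
\]
and for $k\geq1$,
\[
  Q_\alpha^k \defn \bigl\{x \in Q_{\pi(\alpha)}^{k-1} \st d(x, z_\alpha^k) < d(x, z_\gamma^k)~\text{for all}~\gamma \in \pi^{-1}(\pi(\alpha))\setminus\{\alpha\}\bigr\}.
\]
Continuity of $d$ and induction show that each $Q_\alpha^k$ is an open subset of $\Lambda$; properties~\ref{thm:dyadic-regular-hierarchy}~and~\ref{thm:dyadic-regular-hierarchy-disjoint} hold by construction. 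For~\ref{thm:dyadic-regular-exhaust}, the set of points of $\Lambda$ omitted from $\bigcup_\alpha Q_\alpha^k$ splits into the inductively-propagated parent exceptional set and the sibling-tie Voronoi boundaries inside each parent; all such tie-sets are codimension-$1$ and their intersections with the $N$-regular $\Lambda$ are $\mathcal{H}_N$-null. The diameter bound~\ref{thm:dyadic-regular-size} follows immediately with $c_1 = 2$: for $x \in Q_\alpha^k$, maximality of the sibling net within the parent cube yields $d(x, z_\alpha^k) \leq \delta^k$, whence $\diam Q_\alpha^k \leq 2\delta^k$.

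The hard part will be the interior-ball property~\ref{thm:dyadic-regular-fat}, that is, producing a fixed $a_0 > 0$ (independent of $k$) such that $B(z_\alpha^k, a_0\delta^k) \cap \Lambda \subseteq Q_\alpha^k$. The $\delta^k$-separation at level $k$ immediately puts $B(z_\alpha^k, \delta^k/2) \cap \Lambda$ inside the level-$k$ Voronoi cell of $z_\alpha^k$, but one must additionally ensure that this ball stays inside every ancestor $Q_{\pi^j(\alpha)}^{k-j}$. The parent bound $d(z_\alpha^k, z_{\pi(\alpha)}^{k-1}) \leq \delta^{k-1}$ combined with the $\delta^{k-1}$-separation of siblings at level $k-1$ yields, via the triangle inequality, that any point within $a_0\delta^k$ of $z_\alpha^k$ stays strictly closer to $z_{\pi(\alpha)}^{k-1}$ than to any other level-$(k-1)$ center, provided $a_0\delta$ is small enough. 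Iterating this estimate through all ancestors produces an error of order $a_0\delta\sum_{j\geq0}\delta^j \leq a_0\delta/(1-\delta)$, which can be forced below $\tfrac{1}{2}$ by first choosing $\delta$ small and then a fixed small $a_0$ (e.g.\ $a_0 = \delta/4$); this gives~\ref{thm:dyadic-regular-fat} and closes the construction.
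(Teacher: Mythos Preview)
The paper does not itself prove this theorem; it is quoted from \cite{MichaelChrist1990} and used as a black box in the proof of \cref{lem:regular-mantle}. That said, your sketch differs from Christ's actual construction and contains two genuine gaps.

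For property~\ref{thm:dyadic-regular-exhaust} you claim that the Voronoi tie-sets are ``codimension-$1$ and their intersections with the $N$-regular $\Lambda$ are $\mathcal{H}_N$-null''. A tie-set between two centres is indeed a hyperplane in $\R^d$, hence $(d{-}1)$-dimensional, but this forces $\mathcal{H}_N(\Lambda\cap H)=0$ only when $N>d-1$. In the paper's application one has $N=d-1$, and an $N$-regular $\Lambda$ can meet a hyperplane in a set of positive $\mathcal{H}_N$-measure: take $\Lambda$ to be the union of two orthogonal closed unit discs in $\R^3$ through the origin; the perpendicular bisector of the two points $(\pm\tfrac12,0,0)\in\Lambda$ contains one full disc. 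Christ's proof does not argue via Voronoi boundaries at all; his cubes are defined through infinite descendant chains in the parent tree, and the null-boundary statement is derived from a separate quantitative \emph{thin boundary} estimate.

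For property~\ref{thm:dyadic-regular-size}, the step ``maximality of the sibling net within the parent cube yields $d(x,z_\alpha^k)\leq\delta^k$'' is unjustified. The siblings of $\alpha$ are those level-$k$ centres whose assigned parent is $\pi(\alpha)$, and nothing in your construction makes them a maximal $\delta^k$-net in $Q_{\pi(\alpha)}^{k-1}$; the global level-$k$ centre closest to a given $x\in Q_{\pi(\alpha)}^{k-1}$ may well have a different parent. You therefore only inherit $d(x,z_\alpha^k)\leq\diam Q_{\pi(\alpha)}^{k-1}$, a bound of order $\delta^{k-1}$, and the induction does not close with any fixed $c_1$. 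In Christ's construction the cube $Q_\alpha^k$ consists, roughly, of those $x$ whose nearest level-$\ell$ centre is a descendant of $\alpha$ for every $\ell\geq k$, so the nearest level-$k$ centre to any $x\in Q_\alpha^k$ is $z_\alpha^k$ itself and the diameter bound follows at once from maximality of the full net.
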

\begin{remark}
  It will be useful to observe that by
  property~\ref{thm:dyadic-regular-fat} of the Christ
  decomposition in \cref{thm:dyadic-regular} and
  $N$-regularity of $\Lambda$, there is a constant
  $c>0$ such that
  \begin{equation}
    \label{eq:dyadic-regular}
    \mathcal{H}_N(Q_\alpha^k) \geq
    \mathcal{H}_N\bigl(B(z_\alpha^k,a_0\delta^k) \cap
    \Lambda\bigr) \geq c(a_0\delta^k)^N \quad 
    (\alpha \in I_k)
  \end{equation}
  for all $k \in \N_0$ such that $a_0\delta^k \leq
  1$.\label{rem:dyadic-regular-useful}
\end{remark}

In fact, the Christ decomposition as
established in~\cite[Theorem~11]{MichaelChrist1990} has some more
properties and is valid for \emph{locally doubling metric
  measure spaces} in general; the ``locally'' part is due to
Morris~\cite[Proposition~4.2]{Morris_2012}. We have just extracted the
necessary bits needed to prove \cref{lem:regular-mantle}
which we repeat:

\smallskip \emph{Let $\Lambda \subset \R^d$ be bounded and $N$-regular. Let
  further $\Xi \subseteq \Lambda$ and $\rho > 0$. Then there exists
  an $N$-set $\Xi^\bullet$ such that
  $\Xi \subseteq \Xi^\bullet \subseteq \Lambda$ and
  $\sup \{\dist(x,\Xi) \st x \in \Xi^\bullet\setminus\Xi\} \leq
  \rho$.}

\begin{proof}[Proof of \cref{lem:regular-mantle}]
  Consider the Christ decomposition of $\Lambda$ and its data as
  stated in \cref{thm:dyadic-regular}.  Let $M \in \N_0$
  be so large that $c_1 \delta^M \leq \rho\wedge1$ and define
  \begin{equation*}
    \Xi^\bullet \defn \Xi \cup \bigl\{ Q_\alpha^M \st \alpha \in
    I_M,~Q_\alpha^M\cap \Xi \neq \emptyset\bigr\}.
  \end{equation*}
  By the choice of $M$ and
  property~\ref{thm:dyadic-regular-size} of the Christ
  decomposition, we already have
  $\sup \{\dist(x,\Xi) \st x \in \Xi^\bullet\setminus\Xi\} \leq
  \rho$. We show that $\Xi^\bullet$ is $N$-regular. Since
  $\Xi^\bullet \subseteq \Lambda$ and the latter is $N$-regular,
  the upper estimate
  $\mathcal H_N(\Xi^\bullet \cap B(x,r)) \lesssim r^N$ for all
  $x \in \Xi^\bullet$ and $r \leq 1$ as in~\eqref{eq:d-1} is for
  free.

  For the lower estimate, let $x \in \Xi^\bullet$. If there is
  $\alpha \in I_M$ such that $x \in Q_\alpha^M$, then
  $Q_\alpha^M \subseteq \Xi^\bullet$ due to
  property~\ref{thm:dyadic-regular-hierarchy-disjoint} of the
  Christ decomposition. Thus it is enough to show the lower
  estimate for $N$-regularity for $Q_\alpha^M$. (Of course, we
  thereby prove that $Q_\alpha^M$ is $N$-regular since the upper
  estimate is again for free.)

  In fact, we can assume even that $x$ is an element of a member
  of \emph{every} generation $k \in \N_0$, i.e.,
  \begin{equation*}
    x \in \Upsilon \defn
    \bigl\{ y \in \Lambda\st \text{for every}~k \in \N_0~\text{there
      is}~\alpha\in I_k~\text{such that}~y \in Q_\alpha^k\bigr\}.
  \end{equation*}
  Indeed, suppose we want to show a lower $N$-regularity
  estimate for $z \in \Lambda\setminus\Upsilon$. The set
  $\Lambda\setminus\Upsilon$ is of $\mathcal{H}_N$-measure zero
  by property~\ref{thm:dyadic-regular-exhaust} of the Christ
  decomposition. For every $r \leq 1$, the intersections
  $\Lambda \cap B(z,r/2)$ have positive $\mathcal{H}_N$-measure
  by $N$-regularity of $\Lambda$, hence they cannot be subsets
  of $\Lambda\setminus\Upsilon$. This implies that for every
  $r \leq 1$, there is $x \in \Upsilon \cap B(z,r/2)$. Since
  $B(z,r)$ contains $B(x,r/2)$, it is thus enough to prove the
  lower estimate for $N$-regularity for $x \in \Upsilon$.

  So, let $x \in \Upsilon$ and let $\alpha \in I_M$ be such that
  $x \in Q_\alpha^M$.

  First suppose that $c_1\delta^M<r\leq1$. Then, by
  property~\ref{thm:dyadic-regular-size} of the Christ
  decomposition, $Q_\alpha^M \cap B(x,r) = Q_\alpha^M$. So,
  using~\eqref{eq:dyadic-regular},
  \begin{align*}
    \mathcal{H}_N\bigl(Q_\alpha^M \cap B(x,r)\bigr)   =
    \mathcal{H}_N(Q_\alpha^M) \geq c (a_0\delta^M)^N \geq
    c(a_0\delta^M)^Nr^N. 
  \end{align*}

  Next, let $r \leq c_1\delta^M$. Choose $\ell \in \N_0$ such
  that $c_1\delta^{\ell}\leq r \leq
  c_1\delta^{\ell-1}$. Clearly, $\ell > M$. Due to
  $x\in\Upsilon$, there exists $\beta \in I_\ell$ such that
  $x \in Q_\beta^\ell$ and we have
  $Q_\beta^\ell \subseteq Q_\alpha^M$ by
  property~\ref{thm:dyadic-regular-hierarchy} of the Christ
  decomposition. Again, from
  property~\ref{thm:dyadic-regular-size} of the Christ
  decomposition and the choice of $\ell$ we have
  $Q_\beta^\ell \cap B(x,r) =
  Q_\beta^\ell$. Using~\eqref{eq:dyadic-regular} and the choice
  of $\ell$,
  \begin{align*}
    \mathcal{H}_N\bigl(Q_\alpha^M \cap B(x,r)\bigr) &\geq 
    \mathcal{H}_N\bigl(Q_\beta^\ell \cap B(x,r)\bigr)   =
    \mathcal{H}_N(Q_\beta^\ell) \\ &\geq c (a_0\delta^\ell)^N \geq
    c(a_0\delta/c_1)^Nr^N. 
  \end{align*}
  This completes the proof.
\end{proof}

\medskip

\textbf{Acknowledgment.} We wish to thank Pertti Mattila
(University of Hel\-sin\-ki) and Moritz Egert (Universit\'{e}
Paris-Sud) for pointing out the Christ decomposition and ideas
for the proof of \cref{lem:regular-mantle}.

\providecommand{\bysame}{\leavevmode\hbox
  to3em{\hrulefill}\thinspace}

\def\MR#1{}

\bibliographystyle{amsplain} 
\bibliography{UniformResolvent}

\end{document}